\renewcommand*\env@matrix[1][*\c@MaxMatrixCols c]{%
   \hskip -\arraycolsep
   \let\@ifnextchar\new@ifnextchar
   \array{#1}}
\newenvironment{smallpmatrix}
  {\left(\begin{smallmatrix}}
  {\end{smallmatrix}\right)}
\author{Christian B\"ar}
\author{Rafe Mazzeo}
\title{Manifolds with many Rarita-Schwinger fields}
\date{\today}
\address{Christian B\"ar, 
Institut für Mathematik,
Universit\"at Potsdam, 
D-14476, Potsdam, Germany
}
\urladdr{\href{https://www.math.uni-potsdam.de/baer}{https://www.math.uni-potsdam.de/baer}}
\email{\href{mailto:cbaer@uni-potsdam.de}{cbaer@uni-potsdam.de}}
\address{Rafe Mazzeo, 
Department of Mathematics,
450 Serra Mall,
Stanford, CA 94305-2125, USA}
\urladdr{\href{http://web.stanford.edu/~rmazzeo/cgi-bin/.}{http://web.stanford.edu/~rmazzeo/cgi-bin/.}}
\email{\href{mailto:rmazzeo@stanfod.edu}{rmazzeo@stanford.edu}}
\keywords{Rarita-Schwinger field, vector spinor, complete intersection, K\"ahler-Einstein manifold}
\subjclass[2010]{53C27}
\newcommand{\<}{\langle}
\renewcommand{\>}{\rangle}
\newcommand{\R}{\mathbb{R}}
\newcommand{\Q}{\mathbb{Q}}
\newcommand{\C}{\mathbb{C}}
\newcommand{\N}{\mathbb{N}}
\newcommand{\Z}{\mathbb{Z}}
\newcommand{\DD}{\mathscr{D}}
\newcommand{\HH}{\mathscr{H}}
\newcommand{\NN}{\mathscr{N}}
\newcommand{\Sh}{\widehat\Sigma}
\newcommand{\Spin}{\mathsf{Spin}}
\newcommand{\eps}{\varepsilon}
\newcommand{\myicon}{$\,\,\,\triangleright$}
\DeclareMathOperator{\id}{id}
\DeclareMathOperator{\Ric}{Ric}
\DeclareMathOperator{\ind}{index}
\DeclareMathOperator{\Ahat}{\hat A}
\DeclareMathOperator{\ch}{ch}
\DeclareMathOperator{\RS}{RS}
\DeclareMathOperator{\PS}{PS}
\DeclareMathOperator{\coker}{coker}
\DeclareMathOperator{\OO}{O}
\DeclareMathOperator{\coeff}{coeff}
 \newtheorem{thm}{Theorem}[section]
 \newtheorem{lemma}[thm]{Lemma}
 \newtheorem{cor}[thm]{Corollary}
 \newtheorem{prop}[thm]{Proposition}
 \theoremstyle{definition}
 \newtheorem{definition}[thm]{Definition}
 \newtheorem{remark}[thm]{Remark}
\begin{document}

\begin{abstract}
The Rarita-Schwinger operator is the twisted Dirac operator restricted to $\nicefrac32$-spinors.  
Rarita-Schwinger fields are solutions of this operator which are in addition divergence-free.  
This is an overdetermined problem and solutions are rare; it is even more unexpected for there to be large dimensional spaces of solutions. 

In this paper we prove the existence of a sequence of compact manifolds in any given dimension greater than or equal to $4$ for which the dimension of the space of Rarita-Schwinger fields tends to infinity.  
These manifolds are either simply connected K\"ahler-Einstein spin with negative Einstein constant, or products of such spaces with flat tori. 
Moreover, we construct Calabi-Yau manifolds of even complex dimension with more linearly independent Rarita-Schwinger fields than flat tori of the same dimension.
\end{abstract} 
\maketitle

\section{Introduction}
The goal of this note is to establish the existence of a sequence of compact Riemannian spin manifolds in any fixed dimension $n \geq 4$ for which the number of Rarita-Schwinger fields tends to infinity.  
When $n$ is divisible by $4$, these manifolds are K\"ahler-Einstein with negative Einstein constant, and arise as complete intersections in some higher dimensional complex projective
space. For other values of $n$ we take products of such K\"ahler-Einstein spaces with flat tori. The Rarita-Schwinger operator 
$Q$ is the twisted Dirac operator acting on $\nicefrac32$-spinors, and perhaps somewhat
confusingly, Rarita-Schwinger fields are solutions of this twisted Dirac operator which are also divergence free.
This is an overdetermined right-elliptic operator and so the existence of nontrivial solutions is nongeneric.  
%It is thus not surprising that we consider this problem on K\"ahler-Einstein manifolds.

This operator $Q$ appeared initially in the 1941 paper of Rarita and Schwinger \cite{RS1941} to describe the wave functions of particles of spin $\nicefrac{3}{2}$, and has been used extensively in the physics literature since then. In mathematics it is relatively unstudied, and has appeared mostly
as one example amongst many in the family of all twisted Dirac operators. 
Notably, ￼Semmelmann computed its spectrum on spheres and complex projective spaces \cite{Sdiss}.
Bure\v{s} \cite{B1999} introduced it
from the point of representation theory, studied some basic examples of solutions and computed its index
in general. Branson and Hijazi \cite{BH2002}  proved its conformal covariance and determined the Weitzenb\"ock 
formula for $Q^2$. Unlike the simpler Dirac
operator, this Weitzenb\"ock formula has lower order terms which do not have a sign under standard geometric
hypotheses, so the existence of solutions (divergence-free or not) does not have any immediate connection to
geometry.  M.\ Wang \cite{W1991} studied the role of Rarita-Schwinger fields in the deformation theory of Einstein 
metrics with parallel spinors, see also his earlier paper \cite{W1989}. 
The problem of counting Rarita-Schwinger fields was considered only quite recently by Homma and Semmelmann 
\cite{HS2018}, with related work more recently still in Homma--Tomihisa \cite{HT2020}. The goal of \cite{HS2018}
was to find manifolds admitting {\it any} nontrivial Rarita-Schwinger fields; as part of this they obtain a complete 
classification of positive quaternion-K\"ahler manifolds and spin symmetric 
spaces admitting such fields, but their mention of the negative Einstein case is rather brief.  We refer to all of
these papers for a more extended discussion and some description of the use of Rarita-Schwinger fields in
physics. 

Our main observation is that the negative K\"ahler-Einstein case provides a particularly rich set of examples. 
To set the stage, suppose that $(M,g)$ is a Riemannian manifold carrying a spin structure. Denote by $\Sigma M$
its spin bundle, with its chirality decomposition $\Sigma^+ M \oplus \Sigma^-M$ when $\dim M$ is even.  
The standard Dirac operator acts on sections of $\Sigma M$; coupling to the Levi-Civita connection on $TM$ we may 
then define the twisted
Dirac operator $\DD$ acting on sections of $\Sigma M \otimes T M$.  As we explain in the next section,
this bundle splits as the direct sum of an isometric copy of $\Sigma M$ and another bundle $\widehat{\Sigma} M$,
which is the bundle of $\nicefrac32$-spinors.   The Rarita-Schwinger operator $Q$ is the restriction of $\DD$ to 
$\widehat{\Sigma} M$, i.e., the projection of $\DD \psi$ to $\widehat{\Sigma} M$, where $\psi \in 
\Gamma(\widehat{\Sigma} M)$.  The complementary projection of $\DD \psi$ onto the copy of $\Sigma M$
in $\Sigma M \otimes TM$ is essentially the divergence of $\psi$. The space of Rarita-Schwinger fields is the set 
of such sections $\psi$ for which both components of $\DD \psi$ vanish, and we define
\[
\RS(M) = \dim \{\psi \in \Gamma( \widehat{\Sigma}M): \DD \psi = 0\}.
\]
For even dimensional manifolds we can also define $\RS^\pm(M)$, the dimension of the space of even or odd chirality
Rarita-Schwinger fields.  Our main result is 

\begin{thm} 
Let $n$ be any positive integer greater than or equal to $4$ and $C$ any positive constant. 
Then there exists a compact Riemannian spin manifold $M^n$ such that $\RS(M) > C$. 
If $n$ is divisible by $4$, then we can take $M$ to be a simply connected compact K\"ahler-Einstein manifold of complex dimension $m = n/2$ which is spin and has negative Einstein constant. 
\end{thm}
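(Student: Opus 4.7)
My plan is to exploit the K\"ahler structure of the target manifolds to convert the analytic problem of counting Rarita-Schwinger fields into a holomorphic cohomology problem amenable to complete intersection techniques. Given a K\"ahler manifold $M$ of complex dimension $m$ with spin structure, there is a canonical isomorphism $\Sigma M \cong \Lambda^{0,*}T^*M \otimes K_M^{1/2}$ under which the Dirac operator becomes $\sqrt{2}(\bar\partial + \bar\partial^*)$. Since $TM \otimes \C = T^{1,0}M \oplus T^{0,1}M$, the twisted bundle $\Sigma M \otimes TM$ decomposes into Dolbeault-type summands, and $\DD$ acts as the corresponding $\bar\partial + \bar\partial^*$ operator. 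Thus the full kernel of $\DD$ on $\Sigma M \otimes TM$ is identified with a sum of Dolbeault cohomology groups of $K_M^{1/2}\otimes T^{1,0}M$ and its conjugate. The first technical task is to identify the embedded copy of $\Sigma M$ inside $\Sigma M \otimes TM$ --- which arises via Clifford contraction $\gamma^i \otimes e_i$ --- cohomologically, so that $\RS(M)$ is expressed as the dimension of the gamma-traceless part of these cohomology groups.

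The second step specializes this cohomological description to smooth complete intersections $X \subset \mathbb{CP}^{m+r}$ of multidegree $(d_1,\ldots,d_r)$ with $\sum d_i$ chosen so that $K_X$ is ample (i.e.\ $X$ is of general type) and admits a holomorphic square root (a parity condition on the degrees). Such $X$ is simply connected for $m\geq 2$ by the Lefschetz hyperplane theorem, and by the Aubin-Yau theorem it carries a unique K\"ahler-Einstein metric with negative Einstein constant. For sufficiently positive multidegree, the Akizuki-Nakano vanishing theorem kills the higher cohomology of $K_X^{1/2}\otimes T^{1,0}X$ (and of the related bundles arising in Step 1), so the RS count reduces to a sum of $h^0$-invariants. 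By Hirzebruch-Riemann-Roch each such $h^0$ is a polynomial in the multidegree with positive leading coefficient, and choosing the $d_i$ large enough forces $\RS(X) > C$ for any prescribed $C$.

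For dimensions $n$ not divisible by four, write $n = 4\mu + k$ with $k \in \{1,2,3\}$ and take $M = X \times T^k$, where $X^{4\mu}$ is a K\"ahler-Einstein complete intersection as above and $T^k$ is a flat spin torus. The Rarita-Schwinger operator on a Riemannian product decomposes along the product structure, and parallel spinors on $T^k$ are automatically harmonic and divergence-free, so tensoring an RS field on $X$ with a parallel spinor on $T^k$ and projecting into $\Sh M$ yields an RS field on $M$; this gives $\RS(M) \geq \RS(X) \cdot (\text{number of parallel spinors on } T^k)$, which is still unbounded. The main obstacle is the cohomological identification in Step 1: Clifford contraction intertwines nontrivially with the Dolbeault operator across the bigraded decomposition, and one must verify that projecting away the $\Sigma M$-component preserves the asymptotically dominant contribution to $H^0(X,K_X^{1/2}\otimes T^{1,0}X)$ predicted by Riemann-Roch. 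The techniques of Homma-Semmelmann \cite{HS2018} and M.\ Wang \cite{W1991} provide templates for the algebraic bookkeeping needed to carry this out.
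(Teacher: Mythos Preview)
Your approach is genuinely different from the paper's and, as written, has a real gap precisely at the point you yourself flag as the ``main obstacle.''

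The paper never attempts a cohomological identification of Rarita--Schwinger fields. Instead it exploits the Einstein condition directly: the identity $\tfrac{2-n}{n}PD + QP = \tfrac12\Ric^0$ vanishes on an Einstein manifold, so
\[
0 \to \Gamma(\Sigma^\pm M) \xrightarrow{\ \nabla\ } \Gamma(TM\otimes\Sigma^\pm M) \xrightarrow{\,(\frac{2-n}{n}P^\mp\iota^{-1},\ Q^\pm)\,} \Gamma(\Sh^\mp M) \to 0
\]
is an elliptic complex. Hodge theory gives $\RS^\mp(M) \ge \chi^\pm(M) - \dim\ker\nabla$, and with negative Einstein constant there are no parallel spinors, so $\RS(M) \ge |\langle\Ahat(M)\ch(T^\C M),[M]\rangle|$. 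This characteristic number is then computed for degree-$a$ hypersurfaces in $\C P^{m+1}$ and shown to be a polynomial of degree $m+1$ in $a$. No Dolbeault cohomology, no vanishing theorems, and crucially no need to locate $\Sh M$ inside any holomorphic decomposition.

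Your Step~1 is the crux, and the tools you cite do not resolve it. The Dolbeault identification describes $\ker\DD$ on all of $\Sigma M\otimes T^\C M$, but Rarita--Schwinger fields are the \emph{intersection} $\ker\DD\cap\Gamma(\Sh M)$, not a projection of $\ker\DD$, and $\Sh M$ does not respect the $(1,0)/(0,1)$ splitting of $T^\C M$. The complement of the RS fields inside $\ker\DD$ is governed by a coupled system in $D$, $P$, $P^*$, $Q$ --- not simply by $\ker D$ (which Lichnerowicz kills) --- and there is no a~priori bound on its dimension without something like the paper's elliptic complex. Neither \cite{HS2018} nor \cite{W1991} supplies such an identification on general negative K\"ahler--Einstein manifolds.

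Your Step~2 has a concrete error. Akizuki--Nakano does not ``kill the higher cohomology'' of $K_X^{1/2}\otimes T^{1,0}X$. For $K_X^{1/2}$ ample it gives $H^q(X,\Omega^1_X\otimes K_X^{1/2})=0$ only for $q=m$, and by Serre duality this is exactly the statement $H^0(X, K_X^{1/2}\otimes T^{1,0}X)=0$: the very $h^0$ you single out as the dominant term vanishes identically. The content sits in middle cohomology, where no general vanishing applies as the degree varies (this is not a Serre-vanishing situation, since both $X$ and the line bundle change). Your Riemann--Roch computation then only delivers the alternating sum $\ind\DD^\pm$ --- precisely the characteristic number the paper obtains via Atiyah--Singer --- and you are back to needing a mechanism to pass from $\ind\DD$ to a lower bound on $\RS$. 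That mechanism is the Einstein elliptic complex you have bypassed.

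Your product argument for $n\not\equiv 0\pmod 4$ is correct and matches the paper's.
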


This is a corollary of a more precise theorem which, in the case where $M$ is K\"ahler-Einstein and spin with negative
Einstein constant, estimates from below $\RS(M)$ (and $\RS^\pm(M)$) as the difference of a certain characteristic number 
of $M$ and a number which is essentially the dimension of the space of parallel spinors, see Theorem~\ref{thm:LowerBoundRS}.

Moreover, for each even $m\ge2$ we find a complex $m$-dimensional simply connected compact Calabi-Yau manifold with many Rarita-Schwinger fields.
This means that the dimension of the space of these fields is much larger than that of flat tori of the same dimension, see Corollary~\ref{cor:CYlowerbound} for a precise formulation.

The key tool in the proof is a certain elliptic complex involving the Rarita-Schwinger operator which is only well-defined on Einstein manifolds. 
The index of this complex leads to the aforementioned estimate of $\RS(M)$.  

The existence of arbitrarily many linearly independent Rarita-Schwinger fields in a given dimension provides a stark contrast with the 
behavior of other better-known overdetermined elliptic operators which appear in geometry. For example, the Killing operator is said to be
overdetermined of finite type, which corresponds to the fact that there is a sharp bound for the dimension of the space of solutions of 
the Killing operator which depends only on the dimension of the manifold. The dimension of the space of twistor spinors can
be bounded similarly. Our results show that the Rarita-Schwinger operator is not of this type. This finite type phenomenon was 
studied exhaustively in the work of Kodaira and Spencer. 

\emph{Acknowledgments.}
This work was primarily carried out during a visit of the first-named author to Stanford University in May 2019.
He wishes to thank the Stanford Math Research Center for partial financial support and hospitality during this visit.
He was also supported by SPP 2026 funded by Deutsche Forschungsgemeinschaft. 
The second author was supported by the NSF grant DMS-1608223.

\section{Rarita-Schwinger fields}

We start by describing the geometric setup for the study of Rarita-Schwinger fields.
For an introduction to general aspects of spin geometry see e.g.\ \cite{LM}.

\subsection{The algebra of vector-spinors}

We equip $\R^n$ with its standard Euclidean metric $\<\cdot,\cdot\>$ and standard orientation.
Let $\Sigma_n$ be the complex spinor module of the spin group $\Spin(n)$.
Then there is a linear map $\gamma:\R^n\otimes\Sigma_n \to \Sigma_n$ satisfying the \emph{Clifford relations} 
\begin{equation*}
Y\cdot (X\cdot\phi) + X\cdot (Y\cdot\phi) + 2\<X,Y\>\phi = 0
\end{equation*}
where we have used the notation $X\cdot\phi = \gamma(X\otimes\phi)$.
We denote the kernel of $\gamma$ by $\Sh_n \subset \R^n\otimes\Sigma_n$.
The Euclidean metric on $\R^n$ and the Hermitean metric on $\Sigma_n$ induce a Hermitean metric on $\R^n\otimes\Sigma_n$ and hence on $\Sh_n$.

Let $e_1,\ldots,e_n$ be the standard basis of $\R^n$.
We define
\begin{equation*}
\iota: \Sigma_n \to \R^n\otimes\Sigma_n,
\quad
\iota(\phi) = -\frac1n \sum_{j=1}^n e_j \otimes e_j\cdot \phi.
\end{equation*}
The factor $-\tfrac1n$ is chosen such that $\gamma\circ\iota = \id$, hence $\iota\circ\gamma$ is a projection.
The complementary projection $\hat\pi = \id - \iota\circ\gamma$ has image $\Sh_n$.
The maps $\gamma$, $\iota$, and $\hat\pi$ are $\Spin(n)$-equivariant.
One easily checks that 
\begin{equation*}
|\iota(\phi)|^2 = \frac1n |\phi|^2,
\end{equation*}
in particular, $\iota$ is injective, and that
\begin{equation*}
\gamma^* = n\iota.
\end{equation*}
In particular, $\iota\circ\gamma$ and hence $\hat\pi$ are self-adjoint projections.
We obtain the orthogonal decomposition of $\Spin(n)$-modules $\R^n\otimes\Sigma_n = \iota(\Sigma_n)\oplus\Sh_n$.

The metric on $\R^n$ yields a contraction $\R^n\otimes(\iota(\Sigma_n)\oplus\Sh_n) = \R^n\otimes\R^n\otimes\Sigma_n \to \Sigma_n$ which we denote by $X\otimes \Phi \mapsto \Phi(X)$, i.e.\ $(\sum_{j=1}^n e_j\otimes \phi_j)(X) = \sum_{j=1}^n \<e_j,X\>\phi_j$.

If $n$ is even, $\Sigma_n$ decomposes as a $\Spin(n)$-module into spinors of positive and negative chirality, $\Sigma_n = \Sigma_n^+ \oplus \Sigma_n^-$.
Clifford multiplication interchanges chirality, $\gamma(\R^n\otimes\Sigma_n^\pm)=\Sigma_n^\mp$.
We get a corresponding splitting
\begin{equation*}
\R^n\otimes\Sigma_n^\pm = \iota(\Sigma_n^\mp) \oplus \Sh_n^\pm . 
\end{equation*}
Now let $M$ be an $n$-dimensional Riemannian spin manifold.
Associating the $\Spin(n)$-modules $\R^n$, $\Sigma_n$, and $\Sh_n$ to the spin structure of $M$, we obtain the tangent bundle $TM$, the spinor bundle $\Sigma M$ and the bundle of $\nicefrac32$-spinors $\Sh M$, respectively.
If $n$ is even, we furthermore have the bundles $\Sigma^\pm M$ and $\Sh^\pm M$ of spinors and $\nicefrac32$-spinors of positive and negative chirality.

Since $\gamma$, $\iota$, and $\hat\pi$ are $\Spin(n)$-equivariant, they induce vector bundle morphisms which we again denote by $\gamma$, $\iota$, and $\hat\pi$.

\subsection{Dirac, twistor, and Rarita-Schwinger operator}

The Levi-Civita connection on $TM$ induces one on $\Sigma M$ and hence on $\Sh M$.
We will denote all these connections by $\nabla$.
Note that the connection on the spinor bundle is a differential operator $\nabla:\Gamma(\Sigma M) \to \Gamma(TM\otimes\Sigma M)$.
The \emph{Dirac operator} is defined as 
$$
D=\gamma\circ\nabla:\Gamma(\Sigma M)\to\Gamma(\Sigma M)
$$ 
and the \emph{twistor} or \emph{Penrose operator} as 
$$
P=\hat\pi\circ\nabla:\Gamma(\Sigma M)\to\Gamma(\Sh M).
$$
The Dirac operator is formally self-adjoint, $D=D^*$, while the adjoint of $P$ is given on $\nicefrac32$-spinors $\Phi$ by
$$
P^*\Phi = (\nabla^*\circ\hat\pi)(\Phi) = \nabla^*\Phi = -\sum_{j=1}^n (\nabla_{e_j}\Phi)(e_j).
$$
Now let $\DD:\Gamma(TM\otimes\Sigma M)\to\Gamma(TM\otimes\Sigma M)$ be the twisted Dirac operator acting on vector-spinor fields.
We write it as a $2\times2$-matrix with respect to the decomposition $TM\otimes\Sigma M = \iota(\Sigma M)\oplus\Sh M$:
\renewcommand{\arraystretch}{1.5}
\begin{equation*}
\DD =
\begin{pmatrix}[c|c]
\frac{2-n}{n}\iota\circ D\circ\iota^{-1} & 2\iota\circ P^*\\ \hline 
\frac2n P\circ\iota^{-1}                 & Q
\end{pmatrix}.
\end{equation*}
\renewcommand{\arraystretch}{1}%
This follows from straight-forward computation, see also \cite{W1991}*{Prop.~2.7(b)}.
The part $Q:\Gamma(\Sh M)\to \Gamma(\Sh M)$ is called the \emph{Rarita-Schwinger operator}.
It is a formally self-adjoint first order elliptic differential operator.

If $n$ is even, the Dirac operators $D$ and $\DD$ interchange chirality, hence we get operators $D^\pm:\Gamma(\Sigma^\pm M)\to\Gamma(\Sigma^\mp M)$ and $\DD^\pm:\Gamma(TM \otimes \Sigma^\pm M)\to\Gamma(TM \otimes \Sigma^\mp M)$.
The splitting $TM\otimes\Sigma^\pm M = \iota(\Sigma^\mp M)\oplus\Sh^\pm M$ leads to 
\renewcommand{\arraystretch}{1.5}
\begin{equation*}
\DD^\pm =
\begin{pmatrix}[c|c]
\frac{2-n}{n}\iota\circ D^\mp\circ\iota^{-1} & 2\iota\circ (P^\pm)^*\\ \hline 
\frac2n P^\mp\circ\iota^{-1}                 & Q^\pm
\end{pmatrix}.
\end{equation*}
\renewcommand{\arraystretch}{1}%
(The reader should beware here: by our conventions, the formula for $\DD^\pm$ contains $D^\mp$ in its upper left entry,
which is perhaps awkward, but is coherent with other notation.) 
Thus the Rarita-Schwinger operator also interchanges chirality, $Q^\pm:\Gamma(\Sh^\pm M)\to\Gamma(\Sh^\mp M)$, while the twistor operator $P^\pm:\Gamma(\Sigma^\pm M)\to\Gamma(\Sh^\pm M)$ and its adjoint $(P^\pm)^*:\Gamma(\Sh^\pm M)\to\Gamma(\Sigma^\pm M)$ preserve it.

\subsection{An exact sequence on Einstein manifolds}

As it turns out, the Rarita-Schwinger operator has some especially nice features on Einstein manifolds.

\begin{prop}
Let $M$ be an Einstein spin manifold of dimension $n\ge3$.
Then for every $\alpha\in\R$
\begin{equation}
0 \rightarrow \Gamma(\Sigma M)
\xrightarrow{\begin{smallpmatrix} \iota D  \\ \alpha P \end{smallpmatrix}}
\Gamma(TM\otimes\Sigma M)
\xrightarrow{(\alpha\frac{2-n}{n}P\iota^{-1},\, Q) }
\Gamma(\Sh M) \rightarrow 0
\label{eq:EllComplex}
\end{equation}
is an elliptic complex.

If $n$ is even, then \eqref{eq:EllComplex} restricts to elliptic complexes
\begin{equation}
0 \rightarrow \Gamma(\Sigma^\pm M)
\xrightarrow{\begin{smallpmatrix}\iota D^\pm \\ \alpha P^\pm \end{smallpmatrix}}
\Gamma(TM\otimes\Sigma M^\pm)
\xrightarrow{( \alpha\frac{2-n}{n}P^\mp\iota^{-1},\, Q^\pm)}
\Gamma(\Sh^\mp M) \rightarrow 0
\label{eq:EllComplexChiral}
\end{equation}
If, in addition, $M$ is compact without boundary, then the Euler number of \eqref{eq:EllComplexChiral} is given by 
\begin{equation}
\chi^\pm(M) = \mp \<\Ahat(M) \ch(T^\C M),[M]\> .
\end{equation}
\end{prop}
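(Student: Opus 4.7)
The proposition makes three assertions: that \eqref{eq:EllComplex} is an elliptic complex, that it restricts to chiral elliptic complexes \eqref{eq:EllComplexChiral} when $n$ is even, and that the Euler number of the chiral complex is given by the stated characteristic number. The plan is to (i) verify the complex property using the Einstein hypothesis, (ii) check ellipticity at the symbol level, (iii) note that the chiral splitting is forced by the chirality behaviour of $D$, $P$, $P^*$, $Q$, and (iv) compute the index by rolling the complex up into a single Dirac-type operator and invoking Atiyah-Singer.

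\textbf{Complex property and ellipticity.} The composition of the two differentials in \eqref{eq:EllComplex} simplifies to $\alpha\bigl(\tfrac{2-n}{n}PD+QP\bigr)$, so the complex property reduces to the identity $QP=\tfrac{n-2}{n}PD$, which must hold on Einstein spin manifolds. To establish this, I would compute $\DD\nabla\phi$ in two different ways and compare $\Sh M$-components. On the one hand, decomposing $\nabla\phi=\iota D\phi+P\phi$ and applying the block matrix form of $\DD$ expresses $\DD\nabla\phi$ as
\[
\bigl(\tfrac{2-n}{n}\iota D^2\phi+2\iota P^*P\phi\bigr)+\bigl(\tfrac{2}{n}PD\phi+QP\phi\bigr).
\]
On the other hand, a direct computation yields $\DD\nabla\phi=\nabla D\phi+\sum_{j,k}e_k\otimes e_j\cdot R(e_j,e_k)\phi$; the classical identity $\sum_j e_j\cdot R(e_j,X)\phi=-\tfrac12\Ric(X)\cdot\phi$ turns the curvature sum into $-\tfrac12\sum_k e_k\otimes\Ric(e_k)\cdot\phi$, which on Einstein manifolds, where $\Ric=\lambda g$, becomes a multiple of $\iota\phi$ alone. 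The key consequence is that on Einstein manifolds this curvature term has no $\Sh M$-component, so equating $\Sh M$-components of the two expressions yields $QP=\tfrac{n-2}{n}PD$ (the $\iota(\Sigma M)$-components moreover reproduce the well-known Einstein-manifold identity for $P^*P$). Ellipticity is then a symbol check: the first symbol is injective because $\iota\gamma(\xi)$ is; the second is surjective because $\sigma(Q)(\xi)$ already is (as $Q$ is elliptic); exactness in the middle follows either from a direct Clifford-algebraic computation or by relating the symbol complex to the invertible symbol of $\DD$. The restriction to \eqref{eq:EllComplexChiral} is immediate since $D$ and $Q$ reverse chirality while $P$ and $P^*$ preserve it.

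\textbf{Euler number via Atiyah-Singer.} To compute $\chi^\pm$, I would roll up the chiral complex to the single first-order elliptic operator
\[
T=d_0\oplus d_1^*:\Gamma(\Sigma^\pm M)\oplus\Gamma(\Sh^\mp M)\to\Gamma(TM\otimes\Sigma^\pm M),
\]
whose index equals $\chi^\pm$ by the standard Hodge-theoretic identification $\ker T=\ker d_0\oplus\ker d_1^*$ and $\ker T^*=$ middle harmonic space. Via the Thom isomorphism, the $K$-theory symbol class of $T$ corresponds in $K(M)$ to $[\Sigma^\pm M]+[\Sh^\mp M]-[TM\otimes\Sigma^\pm M]$. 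Using the splitting $TM\otimes\Sigma^\mp M=\iota(\Sigma^\pm M)\oplus\Sh^\mp M$ to substitute $[\Sh^\mp M]=[TM\otimes\Sigma^\mp M]-[\Sigma^\pm M]$, this collapses to $[TM]\otimes([\Sigma^\mp M]-[\Sigma^\pm M])=\mp[TM]\otimes([\Sigma^+ M]-[\Sigma^- M])$, i.e.\ to $\mp$ times the symbol class of the twisted Dirac operator $\DD^+:\Gamma(TM\otimes\Sigma^+ M)\to\Gamma(TM\otimes\Sigma^- M)$. The Atiyah-Singer formula $\ind(\DD^+)=\<\Ahat(M)\ch(T^\C M),[M]\>$ then delivers the claimed $\chi^\pm=\mp\<\Ahat(M)\ch(T^\C M),[M]\>$. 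The chief technical obstacle is the curvature identity underpinning Step (i); the remaining steps are routine applications of symbol calculus and index theory.
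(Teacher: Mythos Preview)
Your treatment of the complex property is essentially the paper's: the identity $\tfrac{2-n}{n}PD+QP=\tfrac12\Ric^0$ (which the paper cites from Wang and you derive by comparing two expressions for $\DD\nabla\phi$) vanishes on Einstein manifolds, and the chiral restriction is automatic. For ellipticity you work harder than necessary: once the symbol sequence is a complex, the alternating sum of fibre dimensions is zero (since $TM\otimes\Sigma=\iota(\Sigma)\oplus\Sh$), so injectivity of the first symbol and surjectivity of the second already force exactness in the middle. That is the ``dimensional reasons'' shortcut the paper uses.

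The index computation, however, contains a genuine gap. You assert that under the Thom isomorphism the symbol class of $T$ corresponds in $K(M)$ to the virtual bundle $[\Sigma^\pm M]+[\Sh^\mp M]-[TM\otimes\Sigma^\pm M]$. This is not a general fact: for an elliptic operator $E\to F$ the inverse Thom image of $[\sigma]$ is \emph{not} simply $[E]-[F]$; two elliptic operators between the same bundles can have different indices. Your subsequent $K(M)$ manipulation collapsing this to $\mp[TM]\otimes([\Sigma^+]-[\Sigma^-])$ is a correct identity of virtual bundles, but it does not by itself identify the symbol class with that of $\DD^+$. What is missing is a homotopy, through elliptic symbols, from $\sigma(T)$ to (a stabilisation of) $\mp\sigma(\DD^+)$.

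The paper supplies exactly this, but packaged more concretely and without invoking the Thom isomorphism. It first deforms the elliptic complex in the parameter $\alpha$ to $\alpha=0$, where it decouples and gives $\chi^\pm=\ind(D^\pm)-\ind(Q^\pm)$. It then runs a second homotopy on the twisted Dirac operator itself, setting $\DD_\alpha=\bigl(\begin{smallmatrix}\frac{2-n}{n}\iota D\iota^{-1}&2\alpha\iota P^*\\ \frac{2\alpha}{n}P\iota^{-1}&Q\end{smallmatrix}\bigr)$; since $\DD_\alpha^2$ is (symbolically) block-diagonal, each $\DD_\alpha$ is elliptic, so $\ind(\DD^\pm)=\ind(\DD_0^\pm)=\ind(D^\mp)+\ind(Q^\pm)=\ind(Q^\pm)-\ind(D^\pm)$. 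Combining the two homotopies yields $\chi^\pm=-\ind(\DD^\pm)$, and Atiyah--Singer for the twisted Dirac operator finishes. If you want to keep your $K$-theoretic packaging, these two $\alpha$-deformations are precisely what you need to justify the symbol-class identification you asserted.
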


Here $\Ahat(M)$ is the $\Ahat$-class of $M$, $\ch(T^\C M)$ is the Chern character of the complexified tangent bundle, and $[M]$ is the fundamental cycle in homology.

\begin{remark}
It seems reasonable to expect that \eqref{eq:EllComplex} and \eqref{eq:EllComplexChiral} might be the initial parts of a BGG
complex that exists in certain situations. To our knowledge, this does not seem to be the case.
\end{remark}

\begin{proof}
Using the Weitzenb\"ock formula for $\DD^2$ one computes
\begin{equation}
\tfrac{2-n}{n} PD + QP = \tfrac12\Ric^0,
\label{eq:EinsteinId}
\end{equation}
where $\Ric^0$ is the traceless part of Ricci curvature, see \cite{W1991}*{Prop.~2.9(b)}.
Since $M$ being Einstein means $\Ric^0=0$, we see that \eqref{eq:EllComplex} is a complex.
Then, for any covector $\xi\in T_x^*M$, the corresponding sequence of principal symbols 
\begin{equation*}
0 \rightarrow \Sigma_x M
\xrightarrow{\sigma\left(\begin{smallpmatrix}\iota D\\ \alpha P\end{smallpmatrix};\xi\right)}
T_xM\otimes\Sigma_x M
\xrightarrow{\sigma\left(\alpha\frac{2-n}{n}P\iota^{-1}, Q;\xi\right)}
\Sh_x M \rightarrow 0
\label{eq:SymbolSequence}
\end{equation*}
is a complex too.
Let $\xi\neq0$.
To show ellipticity of \eqref{eq:EllComplex} it suffices, for dimensional reasons, to show that the two principal symbols have maximal rank.
But this is clear: since $D$ is elliptic the first principal symbol is injective and since $Q$ is elliptic the second is surjective.

It remains to compute the Euler numbers of \eqref{eq:EllComplexChiral}.
Since Euler numbers are invariant under continuous deformations of elliptic complexes, it suffices to do this for $\alpha=0$.
Then \eqref{eq:EllComplexChiral} becomes
\begin{equation*}
0 \rightarrow \Gamma(\Sigma^\pm M)
\xrightarrow{\begin{smallpmatrix}D^\pm\\ 0\end{smallpmatrix}}
\Gamma(\Sigma M^\mp \oplus \Sh^\pm M)
\xrightarrow{\left(0, Q^\pm\right)}
\Gamma(\Sh^\mp M) \rightarrow 0
\end{equation*}
so that the Euler number is given by  
\begin{equation}
\chi^\pm(M) = \ind(D^\pm)-\ind(Q^\pm).
\label{eq:Euler1}
\end{equation}
For $\alpha\in\R$ consider the operator
\renewcommand{\arraystretch}{1.5}
\begin{equation*}
\DD_\alpha =
\begin{pmatrix}[c|c]
\frac{2-n}{n}\iota\circ D\circ\iota^{-1} & 2\alpha\iota\circ P^*\\ \hline 
\frac{2\alpha}n P\circ\iota^{-1}                 & Q
\end{pmatrix}.
\end{equation*}
\renewcommand{\arraystretch}{1}%
Again using \eqref{eq:EinsteinId}, we see that the off-diagonal terms of $\DD_\alpha^2$ vanish for Einstein manifolds and we get that 
\renewcommand{\arraystretch}{1.5}
\begin{equation*}
\DD_\alpha^2 =
\begin{pmatrix}[c|c]
\left(\frac{2-n}{n}\right)^2\iota\circ D^2\circ\iota^{-1} + \frac{4\alpha^2}{n}\iota P^*P\iota^{-1} & 0\\ \hline 
0                & \frac{4\alpha^2}n PP^* + Q^2
\end{pmatrix}
\end{equation*}
\renewcommand{\arraystretch}{1}%
is elliptic for every choice of $\alpha$.
Thus all $\DD_\alpha$ are elliptic.
Therefore
\begin{align*}
\ind(\DD^\pm)
&=
\ind(\DD_1^\pm)
=
\ind(\DD_0^\pm) \\
&=
\ind(D^\mp) + \ind(Q^\pm) \\
&=
\ind(Q^\pm) - \ind(D^\pm).
\end{align*}
Inserting this into \eqref{eq:Euler1} we get, using the Atiyah-Singer index theorem for twisted Dirac operators,
\begin{equation*}
\chi^\pm(M) = -\ind(\DD^\pm) = \mp \<\Ahat(M) \ch(T^\C M),[M]\> .
\qedhere
\end{equation*}
\end{proof}

\begin{remark}
The proof above shows that the index of the Rarita-Schwinger operator $Q^\pm$ is given by 
$$
\ind(Q^\pm) = \ind(\DD^\pm) + \ind(D^\pm) = \pm \<\Ahat(M) (\ch(T^\C M)+1),[M]\> .
$$
See \cite[Sec.~3]{HS2018} for a different derivation of this fact.
\end{remark}
\begin{remark}
We have presented this computation phrased using the elliptic complexes \eqref{eq:EllComplex} and \eqref{eq:EllComplexChiral}. 
It would be straightforward to wrap each of these complexes into a single elliptic operator (depending on the parameter $\alpha$) and compute its index.  
Neither approach seems shorter than the other and the end results are the same.
The complex has the advantage that its index can be refined by its cohomology which might turn out to be useful in some other context.
\end{remark}

\begin{definition}
A section $\psi\in\Gamma(\Sh M)$ is called a \emph{Rarita-Schwinger field} if $\DD\psi=0$.
\end{definition}
Note that this is equivalent to $Q\psi=0$ and $P^*\psi=0$.
Since $Q$ is elliptic, the Rarita-Schwinger equation is overdetermined.
By elliptic regularity theory, (distributional) Rarita-Schwinger fields are smooth.

We use the following notation for the dimension of the space of Rarita-Schwinger fields:
$$
\RS(M) := \dim\{\psi\in\Gamma(\Sh M) \mid \DD\psi=0\}
$$
and similarly, in even dimensions,
$$
\RS^\pm(M) := \dim\{\psi\in\Gamma(\Sh^\pm M) \mid \DD^\pm\psi=0\}
$$
for Rarita-Schwinger fields of positive and negative chirality, respectively.
We then have $\RS(M)=\RS^+(M)+\RS^-(M)$.

\begin{definition}\label{def:Nn}
For $n\in\N$ put
$$
\NN(n) :=
\begin{cases}
2^m & \text{ if }n=4m\text{ or }n=4m+7, \\
2^{m+1} & \text{ if }n=4m+ 2\cdot 7\text{ or }n=4m+3\cdot 7, \\
0    & \text{ else.}
\end{cases}
$$
\end{definition}

For low values of $n$ the number $\NN(n)$ is given by
\renewcommand{\arraystretch}{1.2}
\begin{center}
\begin{tabular}{|c||c|c|c|c|c|c|c|c|c|c|c|c|c|c|}
\hline
$n$ & 1 & 2 & 3 & 4 & 5 & 6 & 7 & 8 & 9 & 10 & 11 & 12 & 13 & 14\\ \hline
$\NN(n)$ & 0 & 0 & 0 & 2 & 0 & 0 & 1 & 4 & 0 & 0 & 2 & 8 & 0 & 2 \\ \hline\hline
$n$ & 15 & 16 & 17 & 18 & 19 & 20 & 21 & 22 & 23 & 24 & 25 & 26 & 27 & 28\\ \hline
$\NN(n)$ & 4 & 16 & 0 & 4 & 8 & 32 & 2 & 8 & 16 & 64 & 4 & 16 & 32 & 128 \\ \hline
\end{tabular}
\medskip
\captionof{table}{Maximal number of linearly independent parallel spinors on manifolds without flat factor}
\end{center}

\begin{lemma}\label{lem:maxdim}
Let $M$ be a complete simply connected Riemannian spin manifold of dimension $n$.
If the dimension of the space of parallel spinors is larger than $\NN(n)$ then $M$ has a flat factor, i.e.\ $M$ is isometric to a Riemannian product $N\times\R^k$ with $k\ge1$.
\end{lemma}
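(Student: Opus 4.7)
The plan is to invoke the de Rham decomposition together with M.~Wang's classification of irreducible Riemannian spin manifolds admitting parallel spinors, compute the parallel spinor space of a Riemannian product explicitly, and then verify the resulting numerical bound by a finite case analysis.

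Since $M$ is complete and simply connected, the de Rham theorem provides an isometric splitting
\[
M \cong \R^k \times M_1 \times \cdots \times M_r,
\]
with each $M_i$ complete, simply connected, irreducible, and non-flat. If $k \ge 1$ then $M$ has a flat factor and the claim is immediate, so I would assume $k = 0$ and aim to bound the parallel spinor space by $\NN(n)$.

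Next I would recall the standard identification of the spin bundle on a Riemannian product of spin manifolds:
\[
\Sigma(M_1 \times M_2) \cong
\begin{cases}
\Sigma M_1 \otimes \Sigma M_2 & \text{if at least one factor has even dimension,} \\
(\Sigma M_1 \otimes \Sigma M_2)^{\oplus 2} & \text{if both factors have odd dimension,}
\end{cases}
\]
intertwining the respective Levi-Civita connections. Iterating this identification over the de Rham factors, the parallel spinor space of $M$ has dimension $2^{\lfloor r_{\mathrm{odd}}/2\rfloor}\prod_i p(M_i)$, where $p(M_i)$ is the number of parallel spinors on $M_i$ and $r_{\mathrm{odd}}$ counts odd-dimensional factors. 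I would then invoke Wang's classification: each irreducible simply connected non-flat Riemannian spin manifold admitting parallel spinors has holonomy one of $SU(m)$ (dim $2m$, $2$ parallel spinors), $Sp(m)$ (dim $4m$, $m+1$ parallel spinors), $G_2$ (dim $7$, $1$ parallel spinor), or $\Spin(7)$ (dim $8$, $1$ parallel spinor). Substituting these values into the product formula reduces the lemma to a finite combinatorial optimization: maximize $2^{\lfloor r_{\mathrm{odd}}/2\rfloor}\prod_i p(M_i)$ over all partitions of $n$ into dimensions from Wang's list. Using $m+1 \le 2^m$ for $m \ge 1$, any $Sp(m)$ factor is dominated by a product of $m$ hyperk\"ahler K3-type factors; analogously $\Spin(7)$ and higher-rank $SU$ factors turn out to be non-optimal. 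The maximum is therefore attained by products of K3 surfaces together with at most three $G_2$ factors, reproducing precisely the four cases recorded in Definition~\ref{def:Nn}.

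The main obstacle lies in the correct bookkeeping of the spinor bundle under iterated Riemannian products, especially the factor of two arising from each pair of odd-dimensional factors and the chirality compatibility in even dimensions. Once this dictionary between Wang's holonomy list and parallel spinor multiplicities is firmly in place, the combinatorial optimization is a straightforward residue-class analysis of $n$ modulo $4$.
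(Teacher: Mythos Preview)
Your approach is the same as the paper's: invoke the de Rham splitting, apply Wang's holonomy classification, use the tensor-product description of spinors on a Riemannian product (with the extra factor of $2$ for each pair of odd-dimensional factors), and then reduce the optimization to products of K3-surfaces together with at most three $\mathsf{G}_2$-manifolds.

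One step, however, does not go through as written---and the paper's own proof has the same lacuna. You assert that ``higher-rank $\mathsf{SU}$ factors turn out to be non-optimal,'' presumably by replacing an $\mathsf{SU}(m)$ factor (real dimension $2m$, two parallel spinors) with $m/2$ copies of K3. This substitution is available only when $m$ is even; for odd $m\ge 3$ the dimension $2m\equiv 2\pmod 4$ cannot be realized by K3-surfaces, and no other entry on Wang's list has dimension $\equiv 2\pmod 4$. For instance, when $n=6$ the only admissible product is a single $\mathsf{SU}(3)$-manifold, which carries two parallel spinors, whereas $\NN(6)=0$; similarly $n=13$ admits $\mathsf{G}_2\times\mathsf{SU}(3)$ with two parallel spinors, but $\NN(13)=0$. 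Thus the claimed equality between the combinatorial maximum and $\NN(n)$ fails in these residue classes. The argument is sound for $n\equiv 0\pmod 4$, which is all that is actually needed later in the paper, but the ``straightforward residue-class analysis'' you allude to is incomplete for general $n$.
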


\begin{proof}
If $M$ is irreducible, i.e.\ does not split as an isometric product, and carries parallel spinors then $M$ must have one of the holonomy groups from the following table (see \cite{W1989}):
\begin{center}
\begin{tabular}{|c|c|c|}
\hline
$n$ & $\mathsf{Hol}(M)$ & $\dim\{\text{parallel spinors}\}$ \\ \hline\hline
$2m$ & $\mathsf{SU}(m)$ & $2$ \\ \hline
$4m$ & $\mathsf{Sp}(m)$ & $m+1$ \\ \hline
$7$ & $\mathsf{G}_2$ & $1$ \\ \hline
$8$ & $\mathsf{Spin}(7)$ & $1$ \\ \hline
\end{tabular}
\medskip
\captionof{table}{Holonomy groups of irreducible manifolds with parallel spinors}
\end{center}
If $M$ is reducible and admits parallel spinors but does not contain a flat factor then $M$ must be isometric to a product of manifolds of the type listed in the table. 
For a Riemannian product $M=M_1\times M_2$ the spinor bundle of $M$ can be naturally identified with the tensor product of the spinor bundles of $M_1$ and $M_2$ unless both $M_1$ and $M_2$ are odd-dimensional.
In that case the spinor bundle of $M$ is the sum of two copies of the tensor product.
The connection on the spinor bundle of $M$ coincides with the tensor product connection.
Hence the space of parallel spinors on $M$ is isomorphic to the tensor product of those on $M_1$ and on $M_2$ or the sum of two copies of it if both $M_1$ and $M_2$ are odd-dimensional. 

Let $\tilde\NN(n)$ be the maximal dimension of the space of parallel spinors on an $n$-dimensional product of manifolds as in the table.
Factors with holonomy $\mathsf{SU}(m)$ ($m\ge3$), $\mathsf{Sp}(m)$ ($m\ge 2$) or $\mathsf{Spin}(7)$ will not occur in such a maximal product because products of K3-surfaces (holonomy $\mathsf{SU}(2)$) yield a higher-dimensional space of parallel spinors.
More than three $\mathsf{G}_2$-factors will not occur either because four such factors can be replaced by seven $\mathsf{SU}(2)$-factors, again yielding a higher-dimensional space of parallel spinors.

Thus $\tilde\NN(n)$ is realized by products of K3-surfaces and up to three $\mathsf{G}_2$-manifolds.
This shows that $\tilde\NN(n)=\NN(n)$ as given in Definition~\ref{def:Nn}.
The additional factor of $2$ in the case of two or three $\mathsf{G}_2$-factors is due to the space of parallel spinors on the product of two (odd-dimensional!) $\mathsf{G}_2$-manifolds being $2$-dimensional.
\end{proof}

\begin{remark}
The proof shows that the bound in Lemma~\ref{lem:maxdim} is sharp.
There exist complete simply connected Riemannian spin manifolds $X$ and $Y$ with holonomy $\mathsf{SU}(2)$ and $\mathsf{G}_2$, respectively.
Then 
$$
M = \underbrace{X \times \ldots \times X}_{m} \times \underbrace{Y \times \ldots \times Y}_{0,1,2,\text{ or }3}
$$
does not have a flat factor and the space of parallel spinors on $M$ has precisely dimension $\NN(\dim(M))$.
\end{remark}

\begin{thm}\label{thm:LowerBoundRS}
Let $M$ be compact Einstein spin manifold without boundary and of even dimension $n\ge4$.
Then
$$
\RS^\pm(M) \ge
\begin{cases}
\pm \<\Ahat(M) \ch(T^\C M),[M]\> - \NN(n)& \text{if $M$ is Ricci-flat,} \\
\pm \<\Ahat(M) \ch(T^\C M),[M]\> & \text{otherwise,} 
\end{cases}
$$
and 
$$
\RS(M) \ge
\begin{cases}|\<\Ahat(M) \ch(T^\C M),[M]\>| - \NN(n)& \text{if $M$ is Ricci-flat,} \\
|\<\Ahat(M) \ch(T^\C M),[M]\>| & \text{otherwise.} 
\end{cases}
$$
\end{thm}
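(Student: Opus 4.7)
The plan is to read off the bound from the elliptic complex~\eqref{eq:EllComplexChiral} by identifying its outer cohomology groups with well-understood spinor spaces. Fix any $\alpha\ne 0$ and use the Euler-number identity
\[
\chi^\pm(M) = \dim H^0 - \dim H^1 + \dim H^2,
\]
with $\chi^\pm(M) = \mp\<\Ahat(M)\ch(T^\C M),[M]\>$ already computed in the previous proposition.

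First I would identify $H^0 = \ker d^0$: a section $\phi \in \Gamma(\Sigma^\pm M)$ lies in this kernel iff $D^\pm\phi = 0$ and $P^\pm\phi = 0$, and since the Levi-Civita covariant derivative decomposes orthogonally as $\nabla\phi = \iota D\phi + P\phi$, these two conditions together are equivalent to $\nabla\phi = 0$. Hence $H^0$ is exactly the space of parallel sections of $\Sigma^\pm M$. Second, by Hodge theory for elliptic complexes $H^2 \cong \ker(d^1)^*$. Using the self-adjointness $Q^* = Q$ together with injectivity of $\iota$, one checks that a section $\psi \in \Gamma(\Sh^\mp M)$ lies in $\ker(d^1)^*$ iff $Q^\mp\psi = 0$ and $(P^\mp)^*\psi = 0$; but these are precisely the two components of the equation $\DD\psi = 0$. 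Therefore $\dim H^2 = \RS^\mp(M)$.

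Dropping $\dim H^1 \ge 0$ and relabeling the chirality then gives
\[
\RS^\pm(M) \ge \pm\<\Ahat(M)\ch(T^\C M),[M]\> - \dim\{\phi \in \Gamma(\Sigma^\mp M) : \nabla\phi = 0\}.
\]
If the Einstein constant is nonzero, the standard curvature identity $R(X,Y)\phi = 0$ for a parallel spinor, traced against the Ricci tensor, forces $\phi=0$; so there are no nontrivial parallel spinors and the $\NN(n)$ term is absent. If $M$ is Ricci-flat, I would pull a parallel spinor on $M$ back to a parallel spinor on the universal cover $\tilde M$, which is complete and simply connected, and apply Lemma~\ref{lem:maxdim}: as long as $\tilde M$ contains no flat factor, the space of parallel spinors on $\tilde M$ has dimension at most $\NN(n)$, and pulling back is injective, so the same bound holds on $M$.

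The combined estimate for $\RS(M) = \RS^+(M)+\RS^-(M)$ then follows from $\RS(M) \ge \max(\RS^+(M),\RS^-(M))$ by choosing whichever chirality turns $\pm\<\Ahat\ch(T^\C M),[M]\>$ into its absolute value. The step I expect to be most delicate is the Ricci-flat case in which $\tilde M$ does contain a flat factor: there the naive count of parallel spinors upstairs can exceed $\NN(n)$, so one must either exploit the $\pi_1(M)$-equivariance required for a parallel spinor on $\tilde M$ to descend to $M$, or observe that a flat factor in $\tilde M$ forces $\<\Ahat(M)\ch(T^\C M),[M]\> = 0$ (a flat summand contributes only rank to $\ch$ and no positive-degree classes to $\Ahat$), making the bound vacuous. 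This is where the bulk of the technical work will live.
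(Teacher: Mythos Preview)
Your proposal is correct and follows essentially the same route as the paper: identify $H^0$ of the complex~\eqref{eq:EllComplexChiral} with parallel spinors (the paper does this by taking $\alpha=1$ so that the first map is literally $\nabla$, while you allow any $\alpha\neq0$ and argue the kernel is the same), identify $H^2$ with Rarita--Schwinger fields of the opposite chirality via the adjoint, drop $H^1$, and then invoke Lemma~\ref{lem:maxdim} together with the vanishing of characteristic numbers when the universal cover has a flat factor. Your anticipation of the flat-factor subtlety and its resolution via Chern--Weil is exactly what the paper does (stated there in one line).
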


\begin{proof}
The elliptic complex \eqref{eq:EllComplexChiral} with $\alpha=1$ reads as
\begin{equation*}
0 \rightarrow \Gamma(\Sigma^\pm M)
\xrightarrow{\nabla}
\Gamma(TM\otimes\Sigma M^\pm)
\xrightarrow{\begin{smallpmatrix}\frac{2-n}{n}P^\mp\iota^{-1}\\ Q^\pm\end{smallpmatrix}}
\Gamma(\Sh^\mp M) \rightarrow 0 .
\end{equation*}
By Hodge theory, 
\begin{align*}
\chi^\pm(M)
&\le
\dim\ker(\nabla) + \dim\coker\begin{smallpmatrix}\frac{2-n}{n}P^\mp\iota^{-1}\\ Q^\pm\end{smallpmatrix} \\
&=
\dim\ker(\nabla) + \dim\ker\begin{smallpmatrix}\frac{2-n}{n}P^\mp\iota^{-1}\\ Q^\pm\end{smallpmatrix}^* \\
&=
\dim\ker(\nabla) + \dim\ker((2-n)\iota (P^\mp)^*, Q^\mp) \\
&=
\dim\ker(\nabla) + \RS^\mp(M).
\end{align*}
Thus
$$
\RS^\pm(M) \ge \chi^\mp(M) - \dim\ker(\nabla) = \pm \<\Ahat(M) \ch(T^\C M),[M]\> - \dim\ker(\nabla).
$$
If $M$ has nontrivial parallel spinors, $M$ must be Ricci flat. 
In this case, either $\dim\ker(\nabla) \le \NN(n)$ which yields the claim or else $\dim\ker(\nabla) > \NN(n)$ and then the universal covering of $M$ must contain a flat factor by Lemma~\ref{lem:maxdim}.
Then all characteristic numbers of $M$ (including $\<\Ahat(M) \ch(T^\C M),[M]\>$) vanish and the claim holds trivially.

If $M$ is not Ricci-flat then $\dim\ker(\nabla) = 0$.
This concludes the proof.
\end{proof}

\section{Complete intersections}

Let $M\subset \C P^{m+r}$ be a complete intersection, defined by $r$ homogeneous polynomials of degrees $a_1,\ldots,a_r\in\N$.
Then $M$ is a complex submanifold of complex dimension $m$.
Let $h_0\in H^2(\C P^{m+r},\Z)$ be the generator of the cohomology ring of $\C P^{m+r}$ represented by $(2\pi \mathrm{i})^{-1}$ times the K\"ahler form.
Let $i:M \hookrightarrow \C P^{m+r}$ be the inclusion map and put $h:=i^*h_0 \in H^2(M,\Z)$.

We regard $TM$ as a complex vector bundle and denote by $T^\C M$ the complexification of the realification of $TM$, i.e.\ $T^\C M = TM \oplus \overline{TM}$.
The Chern class of $TM$ is given by
\begin{equation}
c(TM) = (1+h)^{m+r+1} (1+a_1h)^{-1}\cdots (1+a_rh)^{-1},
\label{eq:ChernClass}
\end{equation}
see e.g.\ \cite{H95}*{p.~159, eq.~(1)}.
In particular, 
\begin{align*}
c(TM)
&=
(1+(m+r+1)h+\OO(h^2))(1-a_1h+\OO(h^2))\cdots(1-a_rh+\OO(h^2)) \\
&=
1+(m+r+1-(a_1+\ldots+a_r))h + \OO(h^2),
\end{align*}
hence the first Chern class is 
\begin{equation}
c_1(TM) = (m+r+1-(a_1+\ldots+a_r))h .
\label{eq:c1}
\end{equation}
Since the second Stiefel-Whitney class is the mod-$2$ reduction of $c_1$, the manifold $M$ is spin if $m+r+1-(a_1+\ldots+a_r)$ is even.

Since $h>0$ we see that $c_1(TM)\le0$ if and only if $a_1+\ldots+a_r\ge m+r+1$.
In this case, $M$ carries a K\"ahler-Einstein metric with nonpositive scalar curvature by the Calabi-Yau theorem.

Next, we observe
\begin{alignat*}{2}
c(T^\C M)
&=&\,&
c(TM) c(\overline{TM}) \\
&=&&
(1+h)^{m+r+1} (1+a_1h)^{-1}\cdots (1+a_rh)^{-1} \\
&&& \cdot (1-h)^{m+r+1} (1-a_1h)^{-1}\cdots (1-a_rh)^{-1} \\
&=&&
(1-h^2)^{m+r+1} (1-a_1^2h^2)^{-1}\cdots (1-a_r^2h^2)^{-1} 
\end{alignat*}
and thus the Pontryagin class of $M$ is given by
\begin{equation}
p(TM) = (1+h^2)^{m+r+1} (1+a_1^2h^2)^{-1}\cdots (1+a_r^2h^2)^{-1} .
\label{eq:PontryaginClass}
\end{equation}
Let $H$ be the complex line bundle over $M$ whose first Chern class is given by $h$.
By~\eqref{eq:ChernClass}, 
$$
c(TM\oplus H^{a_1}\oplus\cdots\oplus H^{a_r}) = c(\underbrace{H\oplus\cdots\oplus H}_{m+r+1}).
$$
The Chern character of a vector bundle is determined by its Chern class, except for the $0$-degree term which is given by the rank of the bundle.
Thus, modulo $H^0(M,\Z)$ we get
\begin{align*}
\ch(TM\oplus H^{a_1}\oplus\cdots\oplus H^{a_r})
&=
\ch(TM)+\ch(H^{a_1}) + \ldots + \ch(H^{a_r}) \\
&=
\ch(TM)+ e^{a_1h} + \ldots + e^{a_rh}, \\
\ch(\underbrace{H\oplus\cdots\oplus H}_{m+r+1})
&=
(m+r+1)e^h.
\end{align*}
Hence
$$
\ch(TM)
=
-1 +(m+r+1)e^h - e^{a_1h} - \ldots - e^{a_rh}
$$
where the $(-1)$-term fixes the $0$-degree part.
We conclude
\begin{alignat*}{2}
\ch(T^\C M) 
&=&& 
\ch(TM) + \ch(\overline{TM}) \\
&=&&
-1 +(m+r+1)e^h - e^{a_1h} - \ldots - e^{a_rh} \\
&&& -1 +(m+r+1)e^{-h} - e^{-a_1h} - \ldots - e^{-a_rh} \\
&=&&\,
2\big( -1 +(m+r+1) \cosh(h) - \cosh(a_1h)- \ldots - \cosh(a_rh) \big).
\end{alignat*}
For $a\in\N$ let $\HH_a := H^a\oplus H^{-a}$, considered as a real bundle of rank $4$.
Its Pontryagin class is given by $p(\HH_a)=1+a^2h^2$.
By \eqref{eq:PontryaginClass},
$$
p(TM\oplus \HH_{a_1} \oplus\cdots\oplus \HH_{a_r}) = p(\underbrace{\HH_1\oplus\cdots\oplus \HH_1}_{m+r+1}),
$$
hence 
\begin{align*}
\Ahat(TM)
&=
\Ahat(\HH_1)^{m+r+1}\cdot \Ahat(\HH_{a_1})^{-1}\cdots \Ahat(\HH_{a_r})^{-1} \\
&=
\left(\frac{h/2}{\sinh(h/2)}\right)^{m+r+1}\cdot\frac{\sinh(a_1h/2)}{a_1h/2}\cdots\frac{\sinh(a_rh/2)}{a_rh/2}.
\end{align*}
Since $\<h^m,[M]\>=a_1\cdots a_r$ (see e.g.\ \cite{H95}*{p.~160}) we find that
\begin{align}
\<&\Ahat(TM)\ch(T^\C M),[M]\> = \notag \\
&
\coeff\bigg(h^m,\frac{2(\frac{h}{2})^{m+1}}{\sinh(\frac{h}{2})^{m+r+1}} \prod_{j=1}^r \sinh\big(\frac{a_jh}{2}\big)
\Big( (m+r+1) \cosh(h) -1 - \sum_{j=1}^r\cosh(a_jh) \Big)
\bigg) .
\label{CharNumberGeneral}
\end{align}
Here $\coeff(h^m,f(h))$ denotes the coefficient of $h^m$ in the power series $f(h)$. (This notation may seem
extraneous, but will be useful below in \S 5.) 

We analyze this coefficient in the case of hypersurfaces ($r=1$).

\begin{lemma}\label{polynomial1}
Let $m\in\N$ be even.
Then
\begin{align}
\coeff\bigg(h^m,
\frac{2(\frac{h}{2})^{m+1}}{\sinh(\frac{h}{2})^{m+2}} \sinh\Big(\frac{ah}{2}\Big)
\Big((m+2) \cosh(h) -1 - \cosh(ah) \Big)
\bigg)
\label{eq:CharNumberHypersurface}
\end{align} 
is a polynomial in $a$ of degree $m+1$.
\end{lemma}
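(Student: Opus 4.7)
The plan is to treat $h$ and $a$ as formal variables and exploit the grading of the various series. First, the prefactor factors as
\[
\frac{2(h/2)^{m+1}}{\sinh(h/2)^{m+2}} = \frac{4}{h}\cdot\left(\frac{h/2}{\sinh(h/2)}\right)^{m+2},
\]
and is $a$-independent; since $x/\sinh(x)$ is an even function with value $1$ at $x=0$, this Laurent series in $h$ contains only odd powers of $h$, starting with $4/h$. Separately, in the expansions $\sinh(ah/2) = \sum_{k\ge 0}(a/2)^{2k+1}h^{2k+1}/(2k+1)!$ and $\cosh(ah) = \sum_{k\ge 0}a^{2k}h^{2k}/(2k)!$ the coefficient of $h^j$ is, in each case, a monomial in $a$ of degree exactly $j$. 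Since multiplication by the $a$-independent series $(m+2)\cosh(h)-1$ cannot raise the $a$-degree, the coefficient of $h^j$ in
\[
g(a,h) := \sinh(ah/2)\bigl[(m+2)\cosh(h)-1-\cosh(ah)\bigr]
\]
is a polynomial in $a$ of degree at most $j$. Combining these two observations, \eqref{eq:CharNumberHypersurface} is a polynomial in $a$ of degree at most $m+1$, because the largest $a$-degree arises only from pairing the $h^{-1}$ part of the prefactor with $\coeff(h^{m+1},g(a,h))$.

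To show this bound is sharp, I would extract the coefficient of $a^{m+1}$ in $g(a,h)$ directly, using
\[
\sinh(ah/2)\cosh(ah) = \tfrac12\bigl(\sinh(3ah/2) - \sinh(ah/2)\bigr)
\]
to handle the cross-term. A short computation gives that the coefficient of $a^{m+1}$ in $g(a,h)$ equals
\[
\frac{h^{m+1}}{2^{m+2}(m+1)!}\bigl[2(m+2)\cosh(h) - 1 - 3^{m+1}\bigr],
\]
and pairing with the $4/h$ part of the prefactor while evaluating the bracket at $h=0$ produces
\[
\frac{2m+3 - 3^{m+1}}{2^{m}(m+1)!}
\]
as the leading-in-$a$ coefficient of \eqref{eq:CharNumberHypersurface}. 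This is nonzero for every even $m\ge 2$ since $3^{m+1} > 2m+3$, which completes the proof.

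The degree upper bound is essentially formal once the grading is set up properly; the genuinely delicate point is the leading-coefficient extraction at the end, where the product-to-sum identity above is what reduces an otherwise awkward parity-restricted sum into a one-line calculation.
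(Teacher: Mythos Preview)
Your proof is correct and follows essentially the same route as the paper's: a grading argument for the degree bound, then the product-to-sum identity $\sinh(ah/2)\cosh(ah)=\tfrac12\bigl(\sinh(3ah/2)-\sinh(ah/2)\bigr)$ to extract the leading coefficient $2^{-m}(2m+3-3^{m+1})/(m+1)!$. The only cosmetic difference is that the paper first pulls out a factor of $a$ and writes the remainder as $f_1(h)g_1(ah)-f_2(h)g_2(ah)$ with even $f_i,g_i$, whereas you isolate the $a$-independent prefactor as $\tfrac{4}{h}$ times an even series and track the Laurent pairing.
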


\begin{proof}
The power series under consideration is of the form
$$
a\cdot(f_1(h)g_1(ah)-f_2(h)g_2(ah))
$$
where $f_1(x),g_1(x),f_2(x),g_2(x)\in\Q\llbracket x\rrbracket$ are even power series.
Indeed, this holds with the choice
\begin{align*}
f_1(x) &= \frac{2(x/2)^{m+2}}{\sinh(x/2)^{m+2}}((m+2) \cosh(x) -1), \\
g_1(x) &= \frac{\sinh(x/2)}{x/2}, \\
f_2(x) &= \frac{2(x/2)^{m+2}}{\sinh(x/2)^{m+2}}, \\
g_2(x) &= \frac{\sinh(x/2)}{x/2}\cosh(x).
\end{align*}
Clearly, the coefficient of $h^m$ in the power series $f_j(h)g_j(ah)$ is a polynomial in $a$ of degree at most $m$.
To see that the degree equals $m$, we determine the coefficient of $a^mh^m$ in $f_j(h)g_j(ah)$ when considered as a power series in the two variables $h$ and $a$.
It is given by $f_j(0)$ times the coefficient of $x^m$ in $g_j(x)$.

For $j=1$ we get 
$$
2(m+1)\cdot \frac{1}{2^m (m+1)!} = \frac{2^{1-m}}{m!}.
$$
For $j=2$ we find, using $g_2(x)=\frac{\sinh(3x/2)-\sinh(x/2)}{x}$,
$$
2\cdot \left(\frac{(3/2)^{m+1}}{(m+1)!}-\frac{(1/2)^{m+1}}{(m+1)!}\right)
=
2^{-m}\frac{3^{m+1}-1}{(m+1)!}.
$$
Thus the coefficient of $a^{m+1}$ in the coefficient of $h^m$ in \eqref{eq:CharNumberHypersurface} is given by
$$
\frac{2^{1-m}}{m!} - 2^{-m}\frac{3^{m+1}-1}{(m+1)!}
=
2^{-m}\frac{2m+3-3^{m+1}}{(m+1)!}
$$
and hence does not vanish.
\end{proof}

\begin{cor}\label{cor:RSlarge}
Let $m\in\N$ be even and $C>0$.
Then there exists a simply connected compact K\"ahler-Einstein spin manifold $M$ with negative scalar curvature and complex dimension $m$ such that $\RS(M)>C$.
\end{cor}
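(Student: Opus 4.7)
The plan is to take $M$ to be a smooth hypersurface in $\C P^{m+1}$ (i.e.\ the case $r=1$) of a suitably chosen even degree $a$, and then simply combine the ingredients already assembled.

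First I would verify that such an $M$ satisfies the required geometric hypotheses for all sufficiently large even $a$. Since $m$ is even, the parity condition $m+2-a \equiv 0 \pmod 2$ following from \eqref{eq:c1} forces $a$ to be even, and then $M$ is spin. By the Lefschetz hyperplane theorem, every smooth hypersurface of complex dimension $m \ge 2$ in $\C P^{m+1}$ is simply connected. For $a \ge m+4$ (the smallest even integer exceeding $m+2$) one has $c_1(TM) = (m+2-a)h < 0$, so by the Aubin--Yau theorem $M$ carries a K\"ahler--Einstein metric with negative scalar curvature. In particular, $M$ is not Ricci-flat.

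Next I would invoke Theorem~\ref{thm:LowerBoundRS}. Since $M$ is a compact Einstein spin manifold of even real dimension $n = 2m \ge 4$ which is not Ricci-flat, the theorem gives
$$
\RS(M) \ge \bigl|\<\Ahat(M)\ch(T^\C M),[M]\>\bigr|.
$$
The characteristic number on the right is precisely \eqref{eq:CharNumberHypersurface} specialized to the hypersurface case. By Lemma~\ref{polynomial1}, this is a polynomial in $a$ of degree exactly $m+1$, with non-vanishing (explicitly negative) leading coefficient $2^{-m}(2m+3-3^{m+1})/(m+1)!$.

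Consequently, as $a$ ranges over even integers tending to infinity, $\bigl|\<\Ahat(M)\ch(T^\C M),[M]\>\bigr|$ grows like $|a|^{m+1}$ and so in particular exceeds any given constant $C$. Choosing $a$ even and sufficiently large therefore yields a hypersurface $M \subset \C P^{m+1}$ with all the required properties and $\RS(M) > C$. There is no real obstacle in this argument: everything reduces to the polynomial growth statement of Lemma~\ref{polynomial1} together with the lower bound of Theorem~\ref{thm:LowerBoundRS}, both already in hand.
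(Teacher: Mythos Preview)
Your proof is correct and follows essentially the same route as the paper: take a smooth hypersurface of large even degree $a$ in $\C P^{m+1}$, use the Lefschetz hyperplane theorem for simple connectivity, the spin and $c_1<0$ conditions from \eqref{eq:c1}, the Aubin--Yau (Calabi--Yau) theorem for the negative K\"ahler--Einstein metric, and then combine Lemma~\ref{polynomial1} with Theorem~\ref{thm:LowerBoundRS}. The only cosmetic differences are your explicit mention of the non--Ricci-flat case in Theorem~\ref{thm:LowerBoundRS} and of the sign of the leading coefficient, neither of which changes the argument.
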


\begin{proof}
Let $M^m_a$ be a smooth complex hypersurface in $\C P^{m+1}$ of degree $a$.
Then $M^m_a$ is compact and simply connected by the Lefschetz hyperplane theorem.
By Lemma~\ref{polynomial1}, $\<\Ahat(TM^m_a)\ch(T^\C M^m_a),[M^m_a]\>$ is a polynomial in $a$ of degree $m+1$.
Thus we may choose $a$ such that
\begin{enumerate}[\myicon]
\item 
$a$ is even, hence $M^m_a$ is spin;
\item
$a>m+2$, hence $c_1(TM^m_a)<0$;
\item
$a$ is so large that $|\<\Ahat(TM^m_a)\ch(T^\C M^m_a),[M^m_a]\>| > C$.
\end{enumerate}
Since $c_1(TM^m_a)<0$ the Calabi-Yau theorem implies that $M^m_a$ carries a K\"ahler-Einstein metric with negative scalar curvature. 
Applying Theorem~\ref{thm:LowerBoundRS} to $M^m_a$ with this metric yields the claim.
\end{proof}

The manifolds we used in the proof of Corollary~\ref{cor:RSlarge} are all hypersurfaces of $\C P^{m+1}$.
One finds more examples using complete intersections of higher codimension.

\begin{lemma}\label{polynomial2}
If $r \geq 1$ is arbitrary and $m$ is even, then the coefficient of $h^m$ in \eqref{CharNumberGeneral} is a nontrivial symmetric polynomial of degree $m+1$ in $\vec a = (a_1, \ldots, a_m)$. 
Its value on $(a_1, 1, 1, \ldots, 1)$ is equal to the coefficient in \eqref{eq:CharNumberHypersurface} evaluated at $a_1$.   
When $m$ is odd, the coefficient of $h^m$ equals $0$.
\end{lemma}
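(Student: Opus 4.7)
My strategy is to break the claim into four parts and treat them by separate elementary arguments. \emph{Symmetry} is immediate from the form of \eqref{CharNumberGeneral}. For \emph{vanishing when $m$ is odd}, I perform a parity check: under $h\mapsto -h$ the sign contributions from $(h/2)^{m+1}$, $\sinh(h/2)^{-(m+r+1)}$, and $\prod_j \sinh(a_j h/2)$ combine to $(-1)^{(m+1)-(m+r+1)+r}=1$, while the $\cosh$-factors are even, so the integrand is even in $h$ and its coefficient of $h^m$ for odd $m$ vanishes.

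For the \emph{evaluation at $(a_1,1,\ldots,1)$} I substitute $a_\ell = 1$ for $\ell\ge 2$ directly. The product $\prod_{\ell=1}^r \sinh(a_\ell h/2) = \sinh(a_1 h/2)\sinh(h/2)^{r-1}$ will cancel $r-1$ of the $m+r+1$ factors of $\sinh(h/2)$ in the denominator, while the bracket $(m+r+1)\cosh h - 1 - \sum_\ell\cosh(a_\ell h)$ collapses to $(m+2)\cosh h - 1 - \cosh(a_1 h)$. What remains is exactly the integrand of \eqref{eq:CharNumberHypersurface} at $a = a_1$, so by Lemma~\ref{polynomial1} this restriction is a polynomial in $a_1$ of degree precisely $m+1$. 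This proves nontriviality, and by symmetry yields $\deg_{a_j}\ge m+1$ for every $j$.

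The genuinely delicate step will be the matching upper bound $\deg_{a_j}\le m+1$. Fix $j$ and separate the $a_j$-dependent factors, writing the integrand as
\[
A(h)\Bigl(\sinh(a_j h/2)\, B_1(h) \;-\; \sinh(a_j h/2)\cosh(a_j h)\Bigr),
\]
where $A(h) := \frac{2(h/2)^{m+1}}{\sinh(h/2)^{m+r+1}}\prod_{\ell\neq j}\sinh(a_\ell h/2)$ and $B_1(h) := (m+r+1)\cosh h - 1 - \sum_{\ell\neq j}\cosh(a_\ell h)$. The crucial observation is that the order-$r$ pole of $\sinh(h/2)^{-(m+r+1)}$ and the order-$(r-1)$ zero of $\prod_{\ell\neq j}\sinh(a_\ell h/2)$ combine to leave $A(h)$ with only a simple pole at $h=0$, while $B_1(h)$ is regular with $B_1(0) = m+1$. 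Using $\sinh(a_j h/2)\cosh(a_j h) = \tfrac12(\sinh(3a_j h/2)-\sinh(a_j h/2))$, both $a_j$-dependent pieces contribute only monomials of the form $a_j^N h^N$ with $N$ odd. Extracting $a_j^N$ from the coefficient of $h^m$ therefore requires $A(h)\,B_1(h)$ to supply an $h^{m-N}$ term, which is possible only when $m-N\ge -1$, i.e.\ $N\le m+1$. This pole-order bookkeeping is the hard part of the proof: the naïve estimate would allow $\deg_{a_j}$ up to $m+r$, and only the cancellation of a pole of order $r$ against a zero of order $r-1$ in $A(h)$ delivers the sharp bound $m+1$.
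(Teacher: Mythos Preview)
Your proof is correct and follows the same outline as the paper's own argument: symmetry is immediate, the parity check under $h\mapsto -h$ gives vanishing for odd $m$, specialization $a_2=\cdots=a_r=1$ reduces to Lemma~\ref{polynomial1}, and the latter supplies both nontriviality and the lower bound on the degree. The paper's proof is quite terse---it dispatches the degree bound with ``it is also not hard to see that the highest power of $\vec a$ in the coefficient of $h^m$ is $m+1$''---whereas you have actually carried out the pole-order bookkeeping that justifies this: isolating the $a_j$-dependent factors and observing that $A(h)$ has at worst a simple pole is precisely the missing computation. Your argument establishes $\deg_{a_j}=m+1$ for each individual variable, which is the natural reading of the statement and the one consistent with the paper's own reference to ``the coefficient of the highest term $a_1^{m+1}$.''
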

\begin{proof}
Expand all the terms in \eqref{CharNumberGeneral} in Taylor series to obtain a quotient of two formal series in $h$. Inverting
in $\mathbb C[a_1, \ldots, a_r][[h]]$ (the field of formal power series in $h$ with coefficients polynomials in the vector $\vec a$) 
shows that the coefficient of $h^m$ is a polynomial in the entries of $\vec a$. This polynomial is obviously symmetric.
An examination of the degrees of each of the series involved shows that the series contains only even powers of $h$. In other words, 
we only need consider the case where $m$ is even.   It is also not hard to see that the highest power of $\vec a$ in the coefficient of
$h^m$ is $m+1$.  Finally, writing the expression in \eqref{CharNumberGeneral} by $F_r(a_1, \ldots, a_r)$, then we also see that
$F_r(a_1, \ldots, a_s, 1, \ldots, 1) = F_s(a_1, \ldots, a_s)$.  In particular, evaluating $F_r$ on $(a_1, 1, \ldots, 1)$, we reduce
to the case considered in Lemma \ref{polynomial1}, and we have in fact calculated the coefficient of the highest term $a_1^{m+1}$ 
explicitly.  This proves the result. 
\end{proof}

\section{Products}

Using suitable complex submanifolds in complex projective spaces, we found compact Riemannian spin manifolds of any real dimension 
divisible by $4$ with arbitrarily large space of Rarita-Schwinger fields. To treat the remaining dimensions we consider products of these 
hypersurfaces with flat tori. 

We start with some general considerations about spinor fields on product manifolds, see \cite[Sec.~1]{B98} for details.
Let $X$ and $Y$ be Riemannian spin manifolds of dimension $n$ and $m$, respectively. We will only need the case 
when $n$ is even which we assume from now on.  If $M := X\times Y$, then $TM$ is naturally identified with 
$\pi_1^*TX\oplus \pi_2^*TY$ and the spinor bundle $\Sigma M$ with $\pi_1^*\Sigma X \otimes \pi_2^*\Sigma Y$.
Here $\pi_1:M\to X$ and $\pi_2:M\to Y$ are the obvious projections. If $m$ is also even, then
%\begin{align*}
\[
\Sigma^\pm M = (\pi_1^*\Sigma^+ X \otimes \pi_2^*\Sigma^\pm Y)\oplus  (\pi_1^*\Sigma^- X \otimes \pi_2^*\Sigma^\mp Y)
%\quad\mbox{ and }\\
%\Sigma^-M &= (\pi_1^*\Sigma^+ X \otimes \pi_2^*\Sigma^- Y) \oplus  (\pi_1^*\Sigma^- X \otimes \pi_2^*\Sigma^+ Y) .
%\end{align*}
\]
Clifford multiplication is given by 
\begin{equation}
\gamma_M((v \oplus w)\otimes \varphi \otimes \psi) = \gamma_X(v\otimes \varphi)\otimes \psi + \eps \varphi\otimes \gamma_Y(w\otimes \psi)
\label{eq:CliffordProduct}
\end{equation}
where $\eps=1$ if $\varphi\in \pi_1^*\Sigma^+ X$ and $\eps=-1$ if $\varphi\in \pi_1^*\Sigma^- X$.
If $\Phi = v \otimes \varphi \in \widehat\Sigma_xX$ and $\psi\in\Sigma_yY$, then $\Phi \otimes \psi \in \pi_1^* T_xX \otimes
\pi_1^* \Sigma_xX \otimes \pi_2^* \Sigma_yY$. By \eqref{eq:CliffordProduct}, $\gamma_M(\Phi\otimes\psi) = \gamma_X(\Phi)\otimes \psi = 0$, 
and hence $\Phi\otimes\psi\in \widehat\Sigma_{(x,y)}M$. This proves that
$$
\pi_1^*\widehat\Sigma X \otimes \pi_2^*\Sigma Y \subset \widehat\Sigma M.
$$

Now let $\Phi$ be a Rarita-Schwinger field on $X$ and $\psi$ a parallel spinor on $Y$. Using a local orthonormal frame 
$e_1,\ldots,e_n,e_{n+1},\ldots,e_{n+m}$ of $M$ where the $e_i$ are tangent to $X$ ($i\le n$) and the $e_{n+i}$ to $Y$,  we compute 
\begin{align*}
\DD_M(\pi_1^*\Phi\otimes\pi_2^*\psi)
% &=
% \sum_{i=1}^{n+m}\gamma_M(e_i\otimes \nabla_{e_i}^M(\pi_1^*\Phi\otimes\pi_2^*\psi)) \\
&=
\sum_{i=1}^{n}\gamma_M\big(e_i\otimes (\pi_1^*\nabla_{e_i}^X\Phi\otimes\pi_2^*\psi)\big) + 
\sum_{i=n+1}^{n+m}\gamma_M\big(e_i\otimes (\pi_1^*\Phi\otimes\pi_2^*\nabla_{e_i}^Y\psi)\big) \\
%&=
%\sum_{i=1}^{n}\pi_1^*\gamma_X\big(e_i\otimes \nabla_{e_i}^X\Phi\big)\otimes\pi_2^*\psi \\
&=
\pi_1^*\DD_X(\Phi)\otimes\pi_2^*\psi  =0.
\end{align*}
Thus $\pi_1^*\Phi\otimes\pi_2^*\psi$ is a Rarita-Schwinger field on $M$.  Writing
$$
\PS(Y) := \dim\{\psi\in\Gamma(\Sigma Y) \mid \nabla\psi=0\},
$$
and when $m$ is even, 
$$
\PS^\pm(Y) := \dim\{\psi\in\Gamma(\Sigma^\pm Y) \mid \nabla\psi=0\},
$$
then we have shown:

\begin{prop}\label{prop:parallel}
Let $X$ and $Y$ be Riemannian spin manifolds, with $X$ even-dimensional.  
Then
$$
\RS(X\times Y) \ge \RS(X)\cdot\PS(Y).
$$
Moreover, if $Y$ is also even-dimensional, then
\usetagform{simple}
\begin{equation}
\RS^\pm (X\times Y) \ge \RS^+(X)\cdot\PS^\pm(Y) + \RS^-(X)\cdot\PS^\mp(Y). 
\tag{$\Box$}
\end{equation}
\usetagform{default}
\end{prop}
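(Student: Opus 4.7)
The key algebraic computation—that $\pi_1^*\Phi \otimes \pi_2^*\psi$ is a Rarita-Schwinger field on $M = X \times Y$ whenever $\DD_X\Phi = 0$ with $\Phi \in \Gamma(\Sh X)$ and $\psi \in \Gamma(\Sigma Y)$ is parallel—has already been carried out in the paragraph preceding the statement. The plan is therefore to promote that construction to a linear map
\[
\mu\colon \ker\bigl(\DD_X|_{\Gamma(\Sh X)}\bigr) \otimes_\C \ker(\nabla^Y) \longrightarrow \ker\bigl(\DD_M|_{\Gamma(\Sh M)}\bigr), \quad \Phi \otimes \psi \longmapsto \pi_1^*\Phi \otimes \pi_2^*\psi,
\]
and to verify that $\mu$ is injective. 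Once this is done, the dimension of its image equals $\RS(X)\cdot\PS(Y)$, which is the first inequality.

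For injectivity, assume first that $Y$ is connected (otherwise work component-by-component and sum). Choose bases $\Phi_1,\ldots,\Phi_p$ of $\ker(\DD_X|_{\Gamma(\Sh X)})$ and $\psi_1,\ldots,\psi_q$ of $\ker(\nabla^Y)$, and suppose $\sum_{i,j} c_{ij}\, \pi_1^*\Phi_i \otimes \pi_2^*\psi_j = 0$. Because a nonzero parallel spinor on a connected manifold vanishes nowhere, linearly independent parallel spinors remain linearly independent in every fiber $\Sigma_y Y$. Fixing an arbitrary $y \in Y$ and reading off the tensor-product relation in $(\Sh X)_x \otimes \Sigma_y Y$ gives $\sum_i c_{ij}\,\Phi_i(x) = 0$ for every $x \in X$ and every $j$, so $c_{ij} = 0$ by linear independence of the $\Phi_i$.

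For the chirality refinement, the decomposition $\R^n \otimes \Sigma_n^\pm = \iota(\Sigma_n^\mp) \oplus \Sh_n^\pm$ from the algebra section gives the inclusion $\Sh^\pm X \subset TX \otimes \Sigma^\pm X$, and combined with the product formula $\Sigma^\pm M = (\Sigma^+ X \otimes \Sigma^\pm Y) \oplus (\Sigma^- X \otimes \Sigma^\mp Y)$ this yields, for any signs $\epsilon,\delta \in \{+,-\}$,
\[
\pi_1^*\Sh^\epsilon X \otimes \pi_2^*\Sigma^\delta Y \;\subset\; \Sh^{\epsilon\delta}M.
\]
Applying the same injectivity argument separately on each of the four chirality blocks and grouping the contributions to $\Sh^+ M$ versus $\Sh^- M$ produces exactly the claimed chirality-refined bound.

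The main obstacle, such as it is, is simply keeping the chirality bookkeeping straight; no new analytic input is required beyond the already-verified identity $\DD_M(\pi_1^*\Phi \otimes \pi_2^*\psi) = \pi_1^*\DD_X\Phi \otimes \pi_2^*\psi$ and the elementary fact about pointwise non-vanishing of parallel spinors on a connected manifold.
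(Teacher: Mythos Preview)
Your proposal is correct and follows the paper's approach exactly: the paper carries out the construction of $\pi_1^*\Phi\otimes\pi_2^*\psi$ and the verification $\DD_M(\pi_1^*\Phi\otimes\pi_2^*\psi)=0$ in the text preceding the proposition, then simply states the proposition with a $\Box$, leaving linear independence implicit. Your added injectivity argument (via pointwise linear independence of parallel spinors on a connected manifold) and chirality bookkeeping are correct details that the paper does not spell out.
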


As an immediate consequence, we can now produce manifolds of any dimension $n \geq 4$ with an arbitrarily
large dimensional space of Rarita-Schwinger fields:
\begin{cor}
For any $n \geq 4$ and $C>0$,  there exists a compact Riemannian spin manifold $M$ of dimension $n$ 
such that $\RS(M)>C$.
\end{cor}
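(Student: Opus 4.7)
The proof is a direct combination of Corollary~\ref{cor:RSlarge} with Proposition~\ref{prop:parallel}, together with the standard fact that flat tori carry parallel spinors. The only work is to split off the dimensions not divisible by $4$ using a torus factor.

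My plan is as follows. Given $n \geq 4$ and $C > 0$, first apply Corollary~\ref{cor:RSlarge} with the even complex dimension $m = 2$ to produce a compact simply connected K\"ahler-Einstein spin manifold $X$ of real dimension $4$, with negative scalar curvature, satisfying $\RS(X) > C$. Next let $T = T^{n-4}$ be the flat torus $\R^{n-4}/\Z^{n-4}$ equipped with the trivial spin structure, interpreting $T^0$ as a point when $n = 4$. For this spin structure every constant spinor on $\R^{n-4}$ descends to a parallel spinor on $T$, so $\PS(T) = 2^{\lfloor (n-4)/2 \rfloor} \geq 1$. Set $M := X \times T$. This is a compact Riemannian spin manifold of dimension $4 + (n - 4) = n$, and since $X$ is even-dimensional, Proposition~\ref{prop:parallel} yields
\[
\RS(M) \;\geq\; \RS(X)\cdot \PS(T) \;\geq\; \RS(X) \;>\; C,
\]
which proves the claim.

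There is essentially no obstacle: the substantive work has already been carried out in Corollary~\ref{cor:RSlarge} (which supplies a compact spin manifold in every real dimension divisible by $4$ with an arbitrarily large space of Rarita-Schwinger fields) and in Proposition~\ref{prop:parallel} (which lets us propagate such fields under taking products with a manifold carrying parallel spinors). One could alternatively take $2m$ to be the largest multiple of $4$ not exceeding $n$ and use a torus of the complementary dimension, but the uniform choice $m = 2$ already suffices for the statement as given.
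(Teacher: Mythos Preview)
Your proof is correct and follows essentially the same approach as the paper: combine Corollary~\ref{cor:RSlarge} with Proposition~\ref{prop:parallel} using a flat torus factor carrying parallel spinors. The only difference is cosmetic: the paper writes $n=2m+k$ with $m$ even and $k\in\{0,1,2,3\}$, taking $X$ of real dimension $2m$ and a torus of dimension at most $3$, whereas you fix the base at real dimension $4$ and absorb everything else into the torus---a variant you yourself note at the end.
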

\begin{proof}
Write $n=2m +k$ where $m$ is even and $k\in\{0,1,2,3\}$.
Let $X$ be a Kähler-Einstein manifold of complex dimension $m$ as in Corollary~\ref{cor:RSlarge} such that $\RS(X)>C$.
If $k=0$ then we simply take $M:=X$, while in the other cases, we take $M:=X\times T^k$ where $T^k$ is a flat torus.
Endow $T^k$ with the (unique) spin structure for which it has nontrivial parallel spinors. Then by Proposition~\ref{prop:parallel}, 
$\RS(M)\ge\RS(X)>C$.
\end{proof}

\section{Calabi-Yau manifolds}

The simplest examples of Ricci flat manifolds with Rarita-Schwinger fields are provided by flat tori, again equipped with the spin structure which admits parallel spinors.
For flat metrics, a section of $\Sh M$ is a Rarita-Schwinger field if and only if it is parallel. 
Thus for $M=T^n$ a flat torus, $\RS(T^n)$ equals the rank of $\Sh T^n$, i.e.\ $\RS(T^n)=(n-1)\cdot 2^{[\nicefrac{n}{2}]}$.

Complete intersections yield more interesting examples. 
For the sake of simplicity, we focus on hypersurfaces. 
Thus, using the previous notation, set $r=1$ and $a=m+2$; a hypersurface $M^m\subset \C P^{m+1}$ of degree $a$ has vanishing first Chern class by \eqref{eq:c1}. 
Thus $M^m$ is spin and by the Calabi-Yau theorem carries a Ricci flat K\"ahler metric. 

According to \eqref{CharNumberGeneral} we have
\begin{align*}
\<&\Ahat(TM^m)\ch(T^\C M^m),[M^m]\> = \\
&\coeff\bigg(h^m,\frac{2(\frac{h}{2})^{m+1}}{\sinh(\frac{h}{2})^{m+2}} \sinh\Big(\frac{(m+2)h}{2}\Big)
\Big((m+2) \cosh(h) -1 - \cosh((m+2)h) \Big)\bigg).
\end{align*}
Since the power series is even, we restrict ourselves to the case of even $m$, i.e., the real dimension of $M^m$ is divisible by $4$.

\begin{lemma}
If $m\in\N$ is even, then the coefficient on the right in the preceding equation equals
\[
%\coeff\bigg(h^m,\frac{2(\frac{h}{2})^{m+1}}{\sinh(\frac{h}{2})^{m+2}} &\sinh\Big(\frac{(m+2)h}{2}\Big)
%\Big((m+2) \cosh(h) -1 - \cosh((m+2)h) \Big)\bigg) \\
%&=
-2\bigg[\begin{pmatrix} 2m+3 \\ m+1 \end{pmatrix} +1 - (m+2)^2\bigg] .
\]
\end{lemma}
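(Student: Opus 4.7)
The plan is to interpret $\coeff(h^m,\cdot)$ as a residue at $u=0$ (after the substitution $u=h/2$), and then evaluate it by converting to a rational $1$-form on $\C P^1$ via $z=e^u$ and applying the sum-of-residues identity.

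\emph{Reduction to three simpler residues.} Substitute $u=h/2$ so that $\coeff(h^m,\cdot) = 2^{-m}\coeff(u^m,\cdot)$, and set $n = m+2$. The identity $\cosh(2x) = 1 + 2\sinh^2 x$ rewrites the bracketed factor as
\begin{equation*}
n\cosh(2u) - 1 - \cosh(2nu) = m + 2\bigl(n\sinh^2 u - \sinh^2(nu)\bigr).
\end{equation*}
Since $\coeff(u^m, u^{m+1}\varphi(u)) = \operatorname{Res}_{u=0}\varphi(u)\,du$ for any $\varphi$ meromorphic at $0$, extracting the $u^m$ coefficient reduces the problem to computing $2mA + 4nB - 4C$, where
\begin{align*}
A &:= \operatorname{Res}_{u=0}\frac{\sinh(nu)}{\sinh(u)^n}\,du, \\
B &:= \operatorname{Res}_{u=0}\frac{\sinh(nu)}{\sinh(u)^{n-2}}\,du, \\
C &:= \operatorname{Res}_{u=0}\frac{\sinh^3(nu)}{\sinh(u)^n}\,du.
\end{align*}

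\emph{Evaluation via sum of residues on $\C P^1$.} Under $z = e^u$ each integrand becomes a rational $1$-form on $\C P^1$ with poles only at $z = 0, \pm 1, \infty$. Since $n$ is even, each of these forms is odd under $z \mapsto -z$, whence $\operatorname{Res}_{z=-1} = \operatorname{Res}_{z=1}$. The global identity $\sum_p \operatorname{Res}_p = 0$ then expresses the desired residue at $z=1$ in terms of those at $z=0$ and $z=\infty$, which are read off from Laurent expansions of $(1-z^{\pm 2})^{-n}$. One obtains
\begin{equation*}
A = 2^{n-1},\qquad B = 2^{n-3}\,m,\qquad C = 2^{n-3}\!\left[\binom{2n-1}{n-1} - 3\right].
\end{equation*}
For $C$ the key move is to linearize via $\sinh^3(nu) = \tfrac14(\sinh(3nu) - 3\sinh(nu))$, reducing it to a residue involving $\sinh(3nu)/\sinh(u)^n$; the central binomial coefficient $\binom{2n-1}{n-1}$ then appears as the coefficient of $z^{2n}$ in the expansion of $(1-z^2)^{-n}$.

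Combining these, using $m(n+2) = (n-2)(n+2) = n^2 - 4$, yields
\begin{equation*}
\coeff(u^m, \cdot) = 2^{n-1}\left[n^2 - 1 - \binom{2n-1}{n-1}\right],
\end{equation*}
and multiplying by $2^{-m} = 2^{2-n}$ gives $2\bigl[n^2 - 1 - \binom{2n-1}{n-1}\bigr] = -2\bigl[\binom{2m+3}{m+1} + 1 - (m+2)^2\bigr]$, as required. The main obstacle is the evaluation of $C$: the $\sinh^3$ factor makes a direct power-series computation unwieldy, and the triple-angle linearization is what makes the central binomial coefficient emerge cleanly from the residues at $z=0$ and $z=\infty$.
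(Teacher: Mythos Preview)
Your argument is correct and reaches the same identity by a genuinely different route than the paper. The paper makes the single substitution $z=e^h-1$, rewrites the integrand as a Laurent polynomial in $1+z$, and then reads off the coefficient of $z^{m+1}$ term by term from the binomial expansions of $(1+z)^k$; the answer assembles from eight separate binomial contributions. You instead substitute $u=h/2$, use $\cosh 2x = 1+2\sinh^2 x$ to split the problem into three structurally simpler residues $A,B,C$, pass to $z=e^u$, and evaluate each via the global residue theorem on $\C P^1$, exploiting the symmetries under $z\mapsto -z$ and $z\mapsto 1/z$ to reduce everything to residues at $0$ and $\infty$. Your decomposition makes it transparent why the central binomial coefficient $\binom{2n-1}{n-1}$ appears---it is a single Taylor coefficient of $(1-z^2)^{-n}$ coming from the $\sinh(3nu)$ term---whereas the paper's method is shorter and requires no preliminary trigonometric splitting.

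One small quibble: what you call ``odd under $z\mapsto -z$'' is really that the coefficient function in $\omega=f(z)\,dz$ is odd, which makes the $1$-form itself \emph{invariant} under $z\mapsto -z$; it is this invariance (not anti-invariance) that yields $\operatorname{Res}_{z=-1}\omega=\operatorname{Res}_{z=1}\omega$. The conclusion is unaffected.
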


\begin{proof}
We compute the coefficient in this power series using two applications of the residue theorem. 
Let $\Gamma$ be a loop encircling the origin once counterclockwise in the complex plane.
Using the substitution $z=\exp(h)-1$ we find
\begin{align*}
&\coeff\bigg(h^m,\frac{2(\frac{h}{2})^{m+1}}{\sinh(\frac{h}{2})^{m+2}} \sinh\Big(\frac{(m+2)h}{2}\Big)
\Big((m+2) \cosh(h) -1 - \cosh((m+2)h) \Big)\bigg) \\
&=
2^{-m}\frac{1}{2\pi i}\int_\Gamma \frac{\sinh\big(\frac{(m+2)h}{2}\big)}{\sinh(\frac{h}{2})^{m+2}} 
\Big((m+2) \cosh(h) -1 - \cosh((m+2)h) \Big) dh \\
&=
2^{-m}\frac{1}{2\pi i}\int_\Gamma \frac{\frac12 [(1+z)^{\frac{m+2}{2}}-(1+z)^{-\frac{m+2}{2}}]}{(\frac12 [(1+z)^{\frac{1}{2}}-(1+z)^{-\frac{1}{2}}])^{m+2}} \times \\
&\quad\quad
\times\Big(\tfrac{m+2}{2}(1+z + (1+z)^{-1}) -1 - \tfrac12 ((1+z)^{m+2} + (1+z)^{-(m+2)}) \Big) \frac{dz}{1+z}\\
&=
\frac{1}{2\pi i}\int_\Gamma \frac{(1+z)^{m+2}-1}{[(1+z)-1]^{m+2}} \times \\
&\quad\quad
\times\Big((m+2)(1+z + (1+z)^{-1}) -2 - ((1+z)^{m+2} + (1+z)^{-(m+2)}) \Big) \frac{dz}{1+z} \\
&=
\coeff\bigg(z^{m+1}, ((1+z)^{m+2}-1)\times \\
&\quad
\times\Big((m+2)(1+z + (1+z)^{-1}) -2 - ((1+z)^{m+2} + (1+z)^{-(m+2)}) \Big)(1+z)^{-1}\bigg)\\
&=
\coeff\bigg(z^{m+1},-(1+z)^{2m+3} + (m+2)(1+z)^{m+2} -(1+z)^{m+1} + \\
&\quad\quad
+(m+2)(1+z)^m -(m+2) + (1+z)^{-1} - (m+2)(1+z)^{-2} +(1+z)^{-(m+3)}  \bigg) \\
&=
-\begin{pmatrix}2m+3 \\ m+1 \end{pmatrix} + (m+2) \begin{pmatrix}m+2 \\ m+1 \end{pmatrix} -1 + 0 - 0 +  \begin{pmatrix}-1 \\ m+1 \end{pmatrix} -(m+2)\begin{pmatrix}-2 \\ m+1 \end{pmatrix} + \\
&\quad\quad\quad
+ \begin{pmatrix}-(m+3) \\ m+1 \end{pmatrix} \\
&=
-2\bigg[\begin{pmatrix} 2m+3 \\ m+1 \end{pmatrix} +1 - (m+2)^2\bigg] .
\qedhere
\end{align*}
\end{proof}

This lemma and Theorem~\ref{thm:LowerBoundRS} combine to give

\begin{cor}\label{cor:CYlowerbound}
Let $m\in\N$ be even.
Then there exists a simply connected compact Calabi-Yau manifold of complex dimension $m$ such that 
\usetagform{simple}
\begin{equation}
\RS(M^m)
\ge
2\bigg[\begin{pmatrix} 2m+3 \\ m+1 \end{pmatrix} +1 - (m+2)^2\bigg] - 2^{m/2}.
\tag{$\Box$}
\end{equation}
\usetagform{default}
\end{cor}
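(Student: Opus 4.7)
The plan is to take $M^m$ to be a smooth hypersurface of degree $a = m+2$ inside $\mathbb{C}P^{m+1}$, and verify that this choice meets all hypotheses of Theorem~\ref{thm:LowerBoundRS} in the Ricci-flat case. By the formula \eqref{eq:c1} for the first Chern class, the choice $a = m+2$ forces $c_1(TM^m) = 0$; in particular $M^m$ is spin, and by the Calabi-Yau theorem it admits a Ricci-flat K\"ahler metric. The Lefschetz hyperplane theorem guarantees simple connectedness for $m\ge 2$. So all that remains is to apply Theorem~\ref{thm:LowerBoundRS} with the characteristic number supplied by the preceding lemma.

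The preceding lemma evaluates
$$
\<\Ahat(TM^m)\ch(T^\C M^m),[M^m]\> \;=\; -2\Big[\tbinom{2m+3}{m+1}+1-(m+2)^2\Big],
$$
so the Ricci-flat branch of Theorem~\ref{thm:LowerBoundRS} gives
$$
\RS(M^m) \;\ge\; 2\Big|\tbinom{2m+3}{m+1}+1-(m+2)^2\Big| - \NN(2m).
$$
Two bookkeeping steps then finish the argument. First, one must identify $\NN(2m)$: since $m$ is even we may write $m=2\ell$, so the real dimension is $n=2m=4\ell$, and Definition~\ref{def:Nn} gives $\NN(4\ell)=2^\ell=2^{m/2}$. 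Second, one must resolve the absolute value by checking that $\binom{2m+3}{m+1}+1 > (m+2)^2$ for every even $m\ge 2$; this is immediate since the central binomial coefficient grows exponentially in $m$ while $(m+2)^2$ is polynomial, and can be verified directly for the base case $m=2$ where $\binom{7}{3}=35 > 16 = (m+2)^2$.

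There is no real obstacle here: the analytic work was already done in Theorem~\ref{thm:LowerBoundRS} and the combinatorial identity was extracted in the preceding lemma, so the corollary is essentially an assembly step. If anything, the only point demanding care is the mismatch between the complex dimension $m$ (in the corollary's statement) and the real dimension $n=2m$ (the argument of $\NN$ in the theorem), together with the observation that for $m$ even the relevant case of Definition~\ref{def:Nn} is the first one, yielding $2^{m/2}$ rather than something larger.
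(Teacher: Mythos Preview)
Your proof is correct and follows exactly the approach the paper intends: the corollary is stated immediately after the lemma with only the sentence ``This lemma and Theorem~\ref{thm:LowerBoundRS} combine to give'', and the surrounding text already records that the degree-$(m+2)$ hypersurface in $\mathbb{C}P^{m+1}$ is simply connected, spin, and Ricci-flat K\"ahler. Your write-up simply makes explicit the two bookkeeping points (identifying $\NN(2m)=2^{m/2}$ and checking the sign of the bracket) that the paper leaves to the reader.
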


\begin{remark}
For $m=2$, $M^m$ is the K3-surface.
In this case Corollary~\ref{cor:CYlowerbound} yields the lower bound $\RS(M^2)\ge 38$.
This is sharp; indeed $\RS(\mathrm{K3})=38$, see Example~(1) to Proposition~4.6 in \cite{HS2018}.
We do not know whether the bound is also sharp in higher dimensions.
\end{remark}

\begin{remark}
It is easy to see that 
$$
2\bigg[\begin{pmatrix} 2m+3 \\ m+1 \end{pmatrix} +1 - (m+2)^2\bigg] - 2^{m/2}
>
(2m-1)\cdot 2^{m}.
$$
Thus our simply connected compact Calabi-Yau manifolds have more linearly independent Rarita-Schwinger fields than flat tori of the same dimension.
This is illustrated by the following table for low dimensions:

\begin{center}
\begin{tabular}{|c|r|r|}
\hline
$m$ & \multicolumn{1}{c|}{$\RS(M^m)\ge$} & \multicolumn{1}{c|}{$\RS(T^{2m})=$} \\ \hline\hline
2 & 38 & 12 \\ \hline
4 & 850 & 112 \\ \hline
6 & 12,736 & 704 \\ \hline
8 & 184,542 & 3,840 \\ \hline
10 & 2,703,838 & 19,456 \\ \hline
12 & 40,116,146 & 94,208 \\ \hline
14 & 601,079,752 & 442,368 \\ \hline
16 & 9,075,134,398 & 2,031,616 \\ \hline
18 & 137,846,527,510 & 9,175,040 \\ \hline
20 & 2,104,098,961,730 & 40,894,464 \\ \hline
22 & 32,247,603,679,902 & 180,355,072 \\ \hline
24 & 495,918,532,942,658 & 788,529,152 \\ \hline
26 & 7,648,690,600,750,682 & 3,422,552,064 \\ \hline
28 & 118,264,581,564,843,242 & 14,763,950,080 \\ \hline
30 & 1,832,624,140,942,555,720 & 63,350,767,616 \\ \hline
\end{tabular}
\medskip
\captionof{table}{Linearly independent Rarita-Schwinger fields on Calabi-Yau manifolds}
\end{center}
\end{remark}

%%%%%%%%%%%%%%%%%%%%%% Bibliography %%%%%%%%%%%%%%%%%%%%%%%%

\begin{bibdiv}
\begin{biblist}

\bib{B98}{article}{
   author={B\"{a}r, Christian},
   title={Extrinsic bounds for eigenvalues of the Dirac operator},
   journal={Ann. Global Anal. Geom.},
   volume={16},
   date={1998},
   number={6},
   pages={573--596},
   issn={0232-704X},
%   review={\MR{1651379}},
%   doi={10.1023/A:1006550532236},
}

\bib{BH2002}{article}{
  title={Bochner-Weitzenb\"ock formulas associated with the Rarita--Schwinger operator},
  author={Branson, Thomas},
  author={Hijazi, Oussama},
  journal={Int. J. Math.},
  volume={13},
  date={2002},
  number={2},
  pages={137--182},
%  doi={10.1007/s00220-019-03324-8}
}

\bib{B1999}{incollection}{
 title={The higher spin Dirac operators},
 author={Jarol\'im Bure\v{s}},
 booktitle={Proceedings of the 7th international DGA conference, Brno, 
Differential Geometry and its Applications},
 address={Masaryk Univ., Brno},
 year={1999},
 pages={319-334},
}

\bib{H95}{book}{
   author={Hirzebruch, Friedrich},
   title={Topological methods in algebraic geometry},
   series={Classics in Mathematics},
%   note={Translated from the German and Appendix One by R. L. E. Schwarzenberger; With a preface to the third English edition by the author and Schwarzenberger; Appendix Two by A. Borel; Reprint of the 1978 edition},
   note={Reprint of the 1978 edition},
   publisher={Springer-Verlag, Berlin},
   date={1995},
   pages={xii+234},
   isbn={3-540-58663-6},
%   review={\MR{1335917}},
}

\bib{HS2018}{article}{
  title={The Kernel of the Rarita--Schwinger Operator on Riemannian Spin Manifolds},
  author={Homma, Yasushi},
  author={Semmelmann, Uwe},
  journal={Commun. Math. Phys.},
  volume={370},
  date={2019},
  number={3},
  pages={853--871},
%  doi={10.1007/s00220-019-03324-8}
}

% \bib{HT2020}{article}{
%     title={Spectra of the Rarita-Schwinger operator on some symmetric spaces},
%     author={Yasushi Homma},
%     author={Takuma Tomihisa},
%     year={2020},
%     journal={arXiv:2001.06167},
%  %   eprint={2001.06167},
%  %   archivePrefix={arXiv},
% }
\bib{HT2020}{article}{
   author={Homma, Yasushi},
   author={Tomihisa, Takuma},
   title={Spectra of the Rarita-Schwinger operator on some symmetric spaces},
   journal={J. Lie Theory},
   volume={31},
   date={2021},
   number={1},
   pages={249--264},
   issn={0949-5932},
   %review={\MR{4161543}},
}
\bib{LM}{book}{
   author={Lawson, H. Blaine, Jr.},
   author={Michelsohn, Marie-Louise},
   title={Spin geometry},
   series={Princeton Mathematical Series},
   volume={38},
   publisher={Princeton University Press, Princeton, NJ},
   date={1989},
   pages={xii+427},
   isbn={0-691-08542-0},
%   review={\MR{1031992}},
}

\bib{RS1941}{article}{
 Author = {William {Rarita}},
 Author = {Julian {Schwinger}},
 Title = {On a theory of particles with half-integral spin},
 %FJournal = {{Physical Review, II. Series}},
 Journal = {{Phys. Rev., II. Ser.}},
 ISSN = {0031-899X},
 Volume = {60},
 Pages = {61},
 Year = {1941},
 Publisher = {American Physical Society, College Park, MD},
}

\bib{Sdiss}{book}{
 author={Semmelmann, Uwe},
 title={Komplexe Kontaktstrukturen und K\"ahlersche Killingspinoren},
 year={1995},
 note={Dissertation, Humboldt-Universit\"at zu Berlin, Germany}
}

\bib{W1989}{article}{
   author={Wang, McKenzie Y.},
   title={Parallel spinors and parallel forms},
   journal={Ann. Global Anal. Geom.},
   volume={7},
   date={1989},
   number={1},
   pages={59--68},
   issn={0232-704X},
%   review={\MR{1029845}},
%   doi={10.1007/BF00137402},
}

\bib{W1991}{article}{
   author={Wang, McKenzie Y.},
   title={Preserving parallel spinors under metric deformations},
   journal={Indiana Univ. Math. J.},
   volume={40},
   date={1991},
   number={3},
   pages={815--844},
   issn={0022-2518},
%   review={\MR{1129331}},
%   doi={10.1512/iumj.1991.40.40037},
}

\end{biblist}
\end{bibdiv}

\end{document}